\newtheorem{Theorem}{Theorem}[section]
\newtheorem{Proposition}[Theorem]{Proposition}
\newtheorem{Corollary}[Theorem]{Corollary}
\newtheorem{Lemma}[Theorem]{Lemma}
\newtheorem{Example}[Theorem]{Example}
\theoremstyle{definition}
\newtheorem{Remark}[Theorem]{Remark}
\newtheorem{Definition}[Theorem]{Definition}
\def\boldentry{\protect\unprotectedboldentry}
\newcommand{\tikztableau}[2][scale=0.6,every node/.style={font=\small}]{
    \def\newtableau{#2}
    \begin{array}{c}
    \begin{tikzpicture}[#1]
    \coordinate (x) at (-0.5,0.5);
    \coordinate (y) at (-0.5,0.5);
    \foreach \row in \newtableau {
        \coordinate (x) at ($(x)-(0,1)$);
        \coordinate (y) at (x);
        \foreach \entry in \row {
            \ifthenelse{\equal{\entry}{X}}
               {
                \node (y) at ($(y) + (1,0)$) {};
                \fill[color=gray!10] ($(y)-(0.5,0.5)$) rectangle +(1,1);
                \draw[color=gray] ($(y)-(0.5,0.5)$) rectangle +(1,1);
               }
               {
                \ifthenelse{\equal{\entry}{\boldentry X}}
                   {
                    \node (y) at ($(y) + (1,0)$) {};
                    \fill[color=gray] ($(y)-(0.5,0.5)$) rectangle +(1,1);
                    \draw ($(y)-(0.5,0.5)$) rectangle +(1,1);
                   }
                   {
                    \node (y) at ($(y) + (1,0)$) {\entry};
                    \draw ($(y)-(0.5,0.5)$) rectangle +(1,1);
                   }
               }
            }
        }
    \end{tikzpicture}
    \end{array}}
\newcommand{\tikztableausmall}[1]{\tikztableau[scale=0.45,every node/.style={font=\rm\small}]{#1}}
\def\sym{\operatorname{\mathsf{Sym}}}
\def\Qsym{\operatorname{\mathsf{QSym}}}
\def\Nsym{\operatorname{\mathsf{NSym}}}
\newcommand{\Id}{\mathrm{Id}}
\newcommand{\comp}{\mathrm{comp}}
\newcommand{\sgn}{\mathrm{sgn}}
\newcommand{\nn}{\mathrm{neg}}
\newcommand{\Z}{\mathrm{Z}}
\def\sort{\operatorname{sort}}
\def\tail{\operatorname{tail}}
\def \fS{{\mathfrak S}}
\def\ZZ{\mathbb{Z}}
\def\BB{\mathbb{B}}
\def\QQ{\mathbb{Q}}
\def\B{{\bf B}}
\def \HH{{H}}
\begin{document}

\title{The Pieri rule for dual immaculate quasi-symmetric functions}

\author[Berger\'on, S\'anchez-Ortega, Zabrocki]{Nantel Berger\'on$^{1,2}$, Juana S\'anchez-Ortega$^{1,2,3,4}$, Mike Zabrocki$^{1,2}$}

\address[1]{Fields Institute\\ Toronto, ON, Canada}
\address[2]{York University\\ Toronto, ON, Canada}
\address[3]{University of Toronto\\ Toronto, ON, Canada}
\address[4]{Universidad de M\'alaga\\ M\'alaga, Spain}

\email{bergeron@yorku.ca}
\email{jsanchezo@uma.es}
\email{zabrocki@mathstat.yorku.ca}

\subjclass[2010]{05E05.}

\keywords{non-commutative symmetric functions, quasi-symmetric functions, tableaux, Schur functions.}

\maketitle

\begin{abstract}    
The immaculate basis of the non-commutative symmetric functions was recently introduced
by the first and third author to lift certain structures in the symmetric functions to the
dual Hopf algebras of the non-commutative and quasi-symmetric functions. 
It was shown that immaculate basis satisfies a positive, multiplicity free right Pieri rule.
It was conjectured that the left Pieri rule may contain signs but that it would be multiplicity free. 
Similarly, it was also conjectured that the dual quasi-symmetric basis would also
satisfy a signed multiplicity free Pieri rule.
We prove these two conjectures here.
\end{abstract}

%%%%%%%%%%%%%%%%%%%%%%%%%%%%%%%%%%%%%%%%%%%%%%%%%%%%%%%%%%%%%%%%%%%%%%%%%%%%%%%%%%%%%%%%%%%%%%%%%%%%%%%%%%%%%%%%%%%%%
%%%%%%%%%%%%%%%%%%%%%%%%%%%%%%%%%%%%%%%%%%%%%%%%%%%%%%%%%%%%%%%%%%%%%%%%%%%%%%%%%%%%%%%%%%%%%%%%%%%%%%%%%%%%%%%%%%%%%

\section{Introduction}
The algebra $\Qsym$ of quasi-symmetric functions was introduced by Gessel \cite{Ges}
and has since found many mathematical applications.  A link between poset Hopf algebras
and quasi-symmetric functions was found by Ehrenborg \cite{Ehrenborg} who encoded the flag vector of a graded
poset as a morphism from a Hopf algebra of graded posets to $\Qsym$.
Connections between structure coefficients and posets were extended further with the introduction
of Pieri operators on posets \cite{BMSvW}.  One central problem of algebraic combinatorics is to
show that certain symmetric functions are Schur positive.
Quasi-symmetric
functions have found an application as an intermediate step in a path to showing
the Schur positivity of symmetric functions (see for example \cite{HHLRU} and \cite{HHL}).

The graded dual Hopf algebra of the quasi-symmetric functions is the algebra of non-commutative 
symmetric functions $\Nsym$.  This Hopf algebra projects under the forgetful map onto $\sym$, and $\sym$ 
injects into the algebra $\Qsym$ of quasi-symmetric functions.
It was shown in \cite{ABS} that $\Nsym$ and $\Qsym$ 
are universal in the category of combinatorial Hopf algebras. They also play an important role in the 
representation theory of the 0-Hecke algebra (see \cite{KT} for details).  The algebra of
non-commutative symmetric functions $\Nsym$ also has applications to Kronecker products
and is closely related to the descent algebra \cite{MR}.  Recent connections have been made between
$\Nsym$ and mould calculus \cite{CHNT}.  The ubiquity of $\Qsym$ shows its importance in mathematics
and why we are interested in finding new bases with good structure with respect to the projection
of $\Nsym$ and the inclusion of $\sym$ into $\Qsym$.

Our program is to study new bases of $\Nsym$ and $\Qsym$ called {\it the immaculate basis}
and the {\it dual immaculate basis} (respectively) which were recently introduced in \cite{BBSSZ} as
an analogue of the Schur functions.  While Gessel's fundamental basis of $\Qsym$ (or the ribbon basis of $\Nsym$)
and the quasi-Schur basis \cite{HLMvW11a} seem to capture some of the Hopf algebra and combinatorial
structure that the Schur functions exhibit, no analogue seems to capture all properties and
it was never clear if any particular basis was completely canonical as an analogue of the Schur basis.
One motivation for further studying the immaculate basis is as a tool for resolving positivity conjectures in the 
space of symmetric functions.  In particular, the immaculate basis lifts the structure of the Schur
functions as a Jacobi-Trudi formula to the non-commutative and quasi-symmetric functions.

It was proved in \cite{BBSSZ} that the immaculate basis has many of 
the same properties of the Schur functions: 
it has a (positive) multiplicity free right-Pieri rule \cite[Theorem 3.5]{BBSSZ}, a simple 
Jacobi-Trudi determinant formula \cite[Theorem 3.23]{BBSSZ}, and they can be built by using an appropriate 
non-commutative Bernstein operator \cite[Definitions 3.1 and 3.2]{BBSSZ}.
By duality, the immaculate basis gives 
rise to a basis of $\Qsym$, which expands positively into the monomial and fundamental bases of $\Qsym$ 
\cite[Propositions 3.32 and 3.33]{BBSSZ}.  Moreover, the Schur function expansion of
a symmetric function can be read off of the expansion in the dual-immaculate basis and positivity of
a symmetric function expression in the dual immaculate basis implies positivity in the Schur basis 
\cite[Corollary 3.40]{BBSSZ}.

This paper is devoted to study of the {\it dual immaculate Pieri conjecture}
posted in \cite[Section 3.7.1]{BBSSZ} and to the {\it  left Pieri rule of immaculate}
as conjectured in \cite[Section 3.2]{BBSSZ}.  We  prove both of these
conjectures in this paper and provide a combinatorial interpretation for the coefficients which appear
in the product.

In particular we prove in Theorem \ref{compoThm} (where the result is stated in full detail),
\begin{Theorem} For a positive integer $s$ and a composition $\alpha$, 
\begin{equation} \label{formula}
F^\perp_s \fS_\alpha = \sum_{\gamma\models |\alpha|-s} (-1)^{\nn(\alpha-\gamma)} \fS_{\gamma}\,,
\end{equation}
where where $\nn(\alpha-\gamma)$ is the number of indices $i$ such that $\alpha_i<\gamma_i$
and the sum is over a subset of compositions $\gamma$ that satisfy $\ell(\gamma) = \ell(\alpha)$ or $\ell(\alpha)-1$.
\end{Theorem}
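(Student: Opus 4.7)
The plan is to expand $\fS_\alpha$ via the Jacobi--Trudi formula of \cite[Theorem 3.23]{BBSSZ}, compute the action of $F^\perp_s$ on each resulting non-commutative monomial in the complete homogeneous generators, and then reorganize the outcome as a signed sum of immaculate functions.

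For the action of $F^\perp_s$ on products of $H$'s, I would combine $F_s = \sum_{\beta\models s} M_\beta$ in $\Qsym$ with the dual pairing $\langle H_\beta,M_\gamma\rangle=\delta_{\beta,\gamma}$ and the multiplicativity of $\Delta H_m = \sum_{i+j=m}H_i\otimes H_j$ to obtain
\[
F^\perp_s\!\bigl(H_{b_1}\cdots H_{b_\ell}\bigr) \;=\; \sum_{\substack{(c_1,\ldots,c_\ell)\in\NN^\ell\\ \sum c_i=s}} H_{b_1-c_1}\cdots H_{b_\ell-c_\ell},
\]
with the convention $H_{-n}=0$ for $n\ge 1$. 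Substituting into $\fS_\alpha = \sum_\sigma(-1)^\sigma H_{\alpha_1+\sigma(1)-1}\cdots H_{\alpha_\ell+\sigma(\ell)-\ell}$ and swapping the two summations produces, for each $c\in\NN^\ell$ with $\sum c_i=s$, the Jacobi--Trudi-type alternating sum
\[
T_c \;:=\; \sum_\sigma (-1)^\sigma H_{(\alpha-c)_1+\sigma(1)-1}\cdots H_{(\alpha-c)_\ell+\sigma(\ell)-\ell}
\]
on the (possibly non-standard) index vector $\alpha-c$, so that $F^\perp_s \fS_\alpha = \sum_c T_c$.

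The next step is to classify each $T_c$ by the shape of $\alpha-c$. If $\alpha-c$ is a bona fide composition, then $T_c = \fS_{\alpha-c}$, contributing to the $\ell(\gamma) = \ell(\alpha)$ case with sign $+1$. If two entries of $\alpha-c+\rho$ coincide (where $\rho$ is the staircase shift), $T_c$ vanishes by symmetry of the alternating sum. A trailing zero in $\alpha-c$ collapses the non-commutative determinant via $H_0 = 1$ to a Jacobi--Trudi expression of length $\ell(\alpha)-1$, yielding $\fS_\gamma$ for $\gamma$ obtained by deleting that final zero; isolated negative or interior-zero entries can similarly be rearranged by adjacent row swaps (introducing a sign $-1$ at each swap) until either a valid collapse occurs or the sum vanishes. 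I expect that after these reductions each nonzero $T_c$ equals $(-1)^{\nn(\alpha-\gamma)}\fS_\gamma$ for a unique $\gamma$ of length $\ell(\alpha)$ or $\ell(\alpha)-1$, with the count of negative entries in $\alpha-c$ matching the statistic $\nn(\alpha-\gamma)$.

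The principal obstacle is the combinatorial bookkeeping. The map $c\mapsto\gamma$ must be shown to surject, with multiplicity one, onto the subset of compositions described in the full statement of Theorem \ref{compoThm}, and the cumulative sign from all the row swaps used in the reduction must be identified with $(-1)^{\nn(\alpha-\gamma)}$. I would attempt this via a sign-reversing involution that pairs each $c$ giving an ``improper'' configuration in $\alpha-c$ (such as an interior zero, two negative entries, or an unresolvable coincidence of shifted indices) with a partner of opposite total sign; the fixed points of this involution would then parameterize the allowed compositions $\gamma$. Designing the involution so that it interacts cleanly with the Jacobi--Trudi reductions and exactly recovers the claimed subset and signs is the main technical challenge.
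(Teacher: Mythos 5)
Your reduction of $F_s^\perp\fS_\alpha$ to the sum $\sum_c T_c$ is sound: the coproduct computation of $F_s^\perp$ on $H$-monomials is correct, and interchanging the two summations does give $F_s^\perp\fS_\alpha=\sum_c T_c$ with $T_c$ the row-ordered Jacobi--Trudi sum on $\alpha-c$ (this is in substance Equation~\eqref{SkewPieri} of the paper). The gap is in your third paragraph: the classification of the $T_c$ relies on determinantal symmetries that fail in $\Nsym$. The sum $T_c$ is the \emph{row-ordered} expansion of a matrix of non-commuting elements, and such an expansion has no row-swap antisymmetry: two ``equal rows'' do not force vanishing, and an adjacent swap is not merely a sign change. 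For instance, for the index vector $(1,2)$ the shifted entries $\beta_i-i$ coincide, yet $\sum_\sigma(-1)^\sigma H_{1+\sigma(1)-1}H_{2+\sigma(2)-2}=H_1H_2-H_2H_1=\fS_{12}\neq 0$. For an interior zero, which does arise (take $\alpha=(2,1,3)$, $c=(0,1,0)$), one computes
\begin{equation*}
T_{(2,0,3)}=H_2H_3-H_2H_1H_2+H_3H_1H_1-H_4H_1,
\end{equation*}
whereas the commutative straightening $s_{(2,0,3)}=-s_{(2,2,1)}$ would predict $-\fS_{221}=-H_2H_2H_1+H_2H_3+H_3H_1H_1-H_4H_1$; the two differ by $H_2H_2H_1-H_2H_1H_2\neq 0$. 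So an ``improper'' $T_c$ is in general not equal to $0$ or to $\pm\fS_\gamma$ for a single composition $\gamma$, and the map $c\mapsto\gamma$ you need does not exist term by term. This is precisely the obstruction that made the statement a conjecture rather than an immediate corollary of \eqref{SkewPieri}.

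The paper avoids improper indices altogether: writing $\fS_\alpha=\BB_{\alpha_1}\cdots\BB_{\alpha_m}(1)$, it commutes $F_s^\perp$ past the Bernstein operators using \eqref{eq:rel5}--\eqref{eq:rel7}, whose right-hand sides involve only $\BB_m$ with $m>0$, so every term produced is $\pm\fS_\gamma$ for an honest composition $\gamma$; the bookkeeping vectors $\beta\in\Z_{s,\alpha}$ replace your $c$'s, and Proposition~\ref{prop:coefset} determines each fibre $\Z_{s,\alpha}^\gamma$ explicitly ($1$ or $r-k+1$ elements with alternating signs, whence the telescoping to $0$ or $\pm1$). That explicit fibre analysis is exactly the combinatorial content you postpone to an unspecified sign-reversing involution, so even granting a straightening law your argument is incomplete at its core. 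To repair your route you would need a genuine straightening theory for row-ordered alternating sums with arbitrary integer indices, which is harder than the theorem itself; I recommend switching to the operator-commutation approach.
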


While the proof of this theorem is a bit technical, the idea is straightforward.  We find a combinatorial interpretation
for the terms which appear in an algebraic expression for $F_s^\perp \fS_\alpha$ that potentially have multiplicity,
then show precisely how the terms with duplicate indices cancel.

In Section~\ref{sec:prelim} we gather together some basic definitions and auxiliary results.
The equivalence of the Pieri rule for dual immaculate and left Pieri rules is shown in 
Section~\ref{sec:PieriD}. 
The precise statement and proof of Pieri rule for dual immaculate are found in Section~\ref{sec:proofD}.

\subsection{Acknowledgements}
This work is supported in part by  NSERC. The second author was supported by the Spanish MEC and Fondos FEDER 
jointly through project MTM2010-15223, and by the Junta de Andaluc\'ia (projects FQM-336 and FQM7156).
It is partially the result of a working session at the Algebraic
Combinatorics Seminar at the Fields Institute with the active
participation of C. Benedetti, O. Yacobi, E. Ens,  H. Heglin, D. Mazur and T. MacHenry.
We are also very thankful to Darij Grinberg for pointing out the 
equivalence of the the left Pieri rule and the dual Pieri rule, 
and for noticing errors in technical results in our earlier version.

This research was facilitated by computer exploration using the open-source
mathematical software \texttt{Sage}~\cite{sage} and its algebraic
combinatorics features developed by the \texttt{Sage-Combinat}
community~\cite{sage-co}.

%%%%%%%%%%%%%%%%%%%%%%%%%%%%%%%%%%%%%%%%%%%%%%%%%%%%%%%%%%%%%%%%%%%%%%%%%%%%%%%%%%%%%%%%%%%%%%%%%%%%%%%%%%%%%%%%%%%%%
%%%%%%%%%%%%%%%%%%%%%%%%%%%%%%%%%%%%%%%%%%%%%%%%%%%%%%%%%%%%%%%%%%%%%%%%%%%%%%%%%%%%%%%%%%%%%%%%%%%%%%%%%%%%%%%%%%%%%

\section{Preliminaries}\label{sec:prelim}

\subsection{Compositions, partitions and combinatorics}

In this subsection, we 
introduce some notation and definitions for partitions and compositions. 

A \textit{composition} $\alpha$ of a non-negative integer $n$ is a tuple 
$\alpha = [\alpha_1, \alpha_2, \ldots, \alpha_m]$ of positive integers 
such that $\sum^m_{i = 1} \alpha_i = n$. We  write 
$\alpha \models n$ to denote that $\alpha$ is a composition of $n$. The entries $\alpha_i$ of $\alpha$ are 
called the {\it parts} of $\alpha$. The {\it size} of $\alpha$ is the sum of the parts and is denoted 
$|\alpha|:=n$. The {\it length} of $\alpha$ is the number of parts and is denoted $\ell(\alpha):=m$. 
If the entries of $\alpha$ are weakly decreasing, i.e., $\alpha_1 \geq \alpha_2 \geq \ldots \geq \alpha_m$ 
then $\alpha$ is referred to as a {\it partition} of $n$, and we  write $\alpha \vdash n$. 

Let $n$ be a non-negative integer, we  represent a composition $\alpha = [\alpha_1, \alpha_2, \ldots, \alpha_m]$ 
of $n$ as a diagram of left justified rows of boxes or cells. We  follow the notation which states that the 
top row of the diagram has $\alpha_1$ cells, while the bottom row has $\alpha_m$ cells. 
For example, the composition
$[3,5,4,7,1,2]$ is represented as
\[ 
\tikztableausmall{
{X, X, X},
{X, X, X, X, X}, 
{X, X, X, X}, 
{X, X, X, X, X, X, X}, 
{X}, 
{X, X}}
\]
For a composition $\alpha = [\alpha_1, \alpha_2, \ldots, \alpha_m]$ and a positive integer $s$, 
we write $[s, \alpha]$ to denote the composition $[s, \alpha_1, \alpha_2, \ldots, \alpha_m]$.
This convention also applies when we consider tuples of integers.

Compositions of $n$ are in bijection with subsets of $\{1, 2, \dots, n-1\}$. We  identify a 
composition $\alpha = [\alpha_1, \alpha_2, \dots, \alpha_m]$ with the subset $D(\alpha) = 
\{\alpha_1, \alpha_1+\alpha_2, \alpha_1+\alpha_2 + \alpha_3, \dots, \alpha_1+\alpha_2+\dots + \alpha_{m-1} \}$. 

Let $\alpha$ and $\beta$ be two compositions of $n$, we say that $\alpha \leq  \beta$ in {\it refinement order} 
if $D(\beta) \subseteq D(\alpha)$. For instance, $[1,1,2,1,3,2,1,4,2] \leq [4,4,2,7]$, since 
$D([1,1,2,1,3,2,1,4,2]) = \{1,2,4,5,8,10,11,15\}$ and $D([4,4,2,7]) = \{4,8,10\}$.

%%%%%%%%%%%%%%%%%%%%%%%%%%%%%%%%%%%%%%%%%%%%%%%%%%%%%%%%%%%%%%%%%%%%%%%%%%%%%%%%%%%%%%%%%%%%%%%%%%%%%%%%%%%%%%%%%%%%%

\subsection{Schur functions and creation operators}

The {\it algebra of symmetric functions} $\sym$ is the free algebra over $\QQ$ on commutative generators 
$\{h_1, h_2, \ldots \}$. The $h_n$ are usually called the complete homogeneous generators.
$\sym$ has a natural grading given by setting $h_i$ of degree $i$, and extending multiplicatively.  

Given an integer $n\ge 0$ and a partition $\lambda = [\lambda_1, \lambda_2, \ldots, \lambda_m]$ of $n$, 
the {\it complete homogeneous symmetric function} is the element of $\sym$ given by 
$h_\lambda:= h_{\lambda_1} \ldots h_{\lambda_m}$ with the convention that for $n=0$ 
we have a unique (empty) partition $\lambda=[]\vdash n$ and
$h_{[]}=1$. 
The set $\bigoplus_{n\geq0}\{h_\lambda\}_{\lambda \vdash n}$ constitutes a basis of $\sym$. Other  
common bases of $\sym$, that we  consider, are the following:
$e_\lambda$ the elementary;
$m_\lambda$ the  monomial;
$s_\lambda$ the Schur.
For simplicity, we let $h_i$, $e_i$, $m_i$, $p_i$ and $s_i$ denote the
corresponding generators indexed by the partition $[i]$.

It is known that $\sym$ is a self dual Hopf algebra, which has a pairing (the so-called Hall scalar product)
given by 
\[
\langle h_\lambda, m_\mu \rangle =
\langle s_\lambda, s_\mu \rangle = \delta_{\lambda, \mu}.
\]
An element $f \in \sym$ gives rise to an operator $f^\perp: \sym \to \sym$ 
according to the relation:
\begin{equation} \label{perpdef}
\langle fg, h \rangle = \langle g, f^\perp h \rangle \hspace{.1in} \textrm{ for all } g, h \in \sym.
\end{equation} 
Using \eqref{perpdef} as a definition, we can compute the action of the operator $f^\perp$ on an arbitrary element 
$g \in \sym$ by applying the following formula
\[
f^\perp(g) = \sum_{\lambda} \langle g, f a_\lambda \rangle b_\lambda,
\]
where $\{ a_\lambda \}_{\lambda}$ and $\{ b_\lambda \}_\lambda$ are any two
bases of $\sym$ which are dual with respect to the pairing $\langle\cdot,\cdot\rangle$.

We define a ``creation'' operator $\B_m: \sym_n \to \sym_{m+n}$ by: 
\[
\B_m := \sum_{i \geq 0} (-1)^i h_{m+i} e_i^\perp.
\] 

Inspired by the following theorem, which states that creation operators recursively define Schur functions, 
the immaculate basis of $\Nsym$ was introduced in \cite{BBSSZ} (see Definition \ref{def:immaculate}).

\begin{Theorem} {\rm (Bernstein \cite[pg 69-70]{Zel})} \label{th:bern}
For all tuples $\alpha \in \ZZ^m$, 
\[ s_\alpha :=  \det| h_{\alpha_i + j - i } |_{1 \leq i,j \leq \ell(\alpha)} 
= \B_{\alpha_1} \B_{\alpha_2} \cdots \B_{\alpha_{m}}(1).\]
\end{Theorem}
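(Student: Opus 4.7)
The plan is to handle the two equalities separately. The first, $s_\alpha = \det|h_{\alpha_i+j-i}|$ for a partition $\alpha$, is the classical Jacobi-Trudi identity; I would invoke it as standard (it admits a Lindstr\"om-Gessel-Viennot proof via non-intersecting lattice paths) and then adopt the determinantal formula as the definition of $s_\alpha$ for a general integer tuple $\alpha$. The substantive claim is the second equality, which I would prove by induction on $m=\ell(\alpha)$, the base case $m=0$ being immediate.

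For the inductive step, expand the Jacobi-Trudi determinant along its first row,
\[
s_{[\alpha_1,\ldots,\alpha_m]} = \sum_{j=1}^{m}(-1)^{j-1} h_{\alpha_1+j-1}\, N_j,
\]
where $N_j$ denotes the $(1,j)$-minor, and compare with
\[
\B_{\alpha_1}\!\bigl(\B_{\alpha_2}\cdots\B_{\alpha_m}(1)\bigr) = \sum_{i\ge 0}(-1)^i h_{\alpha_1+i}\, e_i^\perp\!\bigl(s_{[\alpha_2,\ldots,\alpha_m]}\bigr),
\]
obtained from the definition of $\B_{\alpha_1}$ and the inductive hypothesis. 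After the relabeling $j=i+1$, these agree coefficient-by-coefficient provided the key lemma
\[
N_{j+1} = e_j^\perp\!\bigl(s_{[\alpha_2,\ldots,\alpha_m]}\bigr) \qquad (j\ge 0,\ N_{j+1}:=0\text{ if }j+1>m).
\]

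To prove the key lemma I would introduce an auxiliary parameterized matrix: writing $\beta=[\alpha_2,\ldots,\alpha_m]$, set $A_{ik}=h_{\beta_i+k-i}$, $A'_{ik}=h_{\beta_i+k-i-1}$, and $M(u)=A+uA'$. Row multilinearity gives $\det M(u)=\sum_{S\subseteq[m-1]}u^{|S|}D^S$, where $D^S$ has row $i$ from $A$ for $i\notin S$ and from $A'$ for $i\in S$. Applying the co-Leibniz rule $e_j^\perp(fg)=\sum_{a+b=j}(e_a^\perp f)(e_b^\perp g)$ (an adjunction consequence of $\Delta(e_j)=\sum_{a+b=j}e_a\otimes e_b$) together with the direct evaluations $e_0^\perp h_n=h_n$, $e_1^\perp h_n=h_{n-1}$, and $e_j^\perp h_n=0$ for $j\ge 2$ (obtained from the dual Pieri rule applied to $h_n=s_{[n]}$) term-by-term in the Leibniz expansion of $\det A=s_\beta$ shows $e_j^\perp s_\beta=\sum_{|S|=j}D^S$, i.e.\ $e_j^\perp s_\beta = [u^j]\det M(u)$. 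On the other hand, column multilinearity writes $\det M(u)$ as a sum over $T\subseteq[m-1]$ of determinants whose $k$-th column is column $k-[k\in T]$ of the augmented matrix $\tilde A_{ic}=h_{\beta_i+c-i}$, $c=0,1,\ldots,m-1$; a short case analysis shows the only $T$ with all columns distinct is $T=\{1,\ldots,j\}$ (any adjacent gap $k\notin T$, $k+1\in T$ forces the $k$-th and $(k+1)$-th columns to coincide), and the surviving determinant is exactly the minor $N_{j+1}$, giving $N_{j+1}=[u^j]\det M(u)$. Equating the two expressions for $[u^j]\det M(u)$ proves the lemma.

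The main technical obstacle is the vanishing argument in the column-multilinearity step: verifying that only $T=\{1,\ldots,j\}$ survives out of the $\binom{m-1}{j}$ subsets of that size. A conceptually cleaner alternative I would keep in reserve is the vertex-operator approach: writing the generating function $\mathbf{B}(t)=H(t)E^\perp(-t^{-1})$ and deriving the commutation $E^\perp(u)H(t)=(1+ut)H(t)E^\perp(u)$ yields the anticommutation $\B_a\B_b+\B_{b-1}\B_{a+1}=0$, which matches the shifted-row antisymmetry of the Jacobi-Trudi determinant exactly; combining it with a direct Pieri-rule verification on partitions also establishes the theorem.
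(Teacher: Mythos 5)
The paper does not prove Theorem~\ref{th:bern} at all: it is quoted as a classical result of Bernstein with a citation to Zelevinsky's lecture notes, and the determinant is taken as the \emph{definition} of $s_\alpha$ for an arbitrary integer tuple. So your argument is necessarily a different route, and it is correct. The induction on $m$ reduces everything to the key lemma $e_j^\perp s_{[\alpha_2,\ldots,\alpha_m]}=N_{j+1}$, and both halves of your proof of that lemma check out: the row-multilinearity expansion of $\det(A+uA')$ together with $\Delta(e_j)=\sum_{a+b=j}e_a\otimes e_b$ and $e_a^\perp h_n=h_{n-a}$ for $a\le 1$, $=0$ for $a\ge 2$ (valid formally with the conventions $h_0=1$, $h_{-r}=0$, so no positivity of $\beta$ is needed) gives $e_j^\perp s_\beta=[u^j]\det M(u)$; and the column-multilinearity step you flag as the obstacle does work, since any $T$ that is not an initial segment has some $k\notin T$ with $k+1\in T$, forcing positions $k$ and $k+1$ to select the same column $c=k$ of the augmented matrix, so only $T=\{1,\ldots,j\}$ survives and yields exactly $N_{j+1}$ with columns already in increasing order (no sign). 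Degree considerations ($\det M(u)$ has degree $\le m-1$ in $u$) also give $e_i^\perp s_\beta=0$ for $i\ge m$, so the two sums match term by term after $j=i+1$. Your reserve argument via the commutation $\B_a\B_b=-\B_{b-1}\B_{a+1}$ is essentially the proof in Zelevinsky and Macdonald; either would be an acceptable replacement for the citation.
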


%%%%%%%%%%%%%%%%%%%%%%%%%%%%%%%%%%%%%%%%%%%%%%%%%%%%%%%%%%%%%%%%%%%%%%%%%%%%%%%%%%%%%%%%%%%%%%%%%%%%%%%%%%%%%%%%%%%%%

\subsection{Non-commutative symmetric functions}

The {\it algebra of non-commutative symmetric functions} $\Nsym$, a non-commutative analogue of $\sym$, arises by
considering an algebra with one non-commutative generator at each positive
degree. More precisely, we define $\Nsym$ as the algebra with generators $\{\HH_1, \HH_2, \dots \}$ and 
no relations. We endow $\Nsym$ with the structure of a graded algebra by setting each generator  
$H_i$ of degree $i$. We write $\Nsym_n$ to denote the 
graded component of $\Nsym$ of degree $n$. 

Given a composition $\alpha = [\alpha_1, \alpha_2, \dots, \alpha_m]$, the \textit{complete homogeneous function} 
associated to $\alpha$ is defined as $\HH_\alpha := \HH_{\alpha_1} \HH_{\alpha_2} \ldots \HH_{\alpha_m}$. 
Let $n$ be a nonnegative integer, the set $\{\HH_\alpha \}_{\alpha \vDash n}$ indexed by compositions of $n$ 
is a basis of $\Nsym_n$. To make this notation consistent, some formulas  use expressions that have 
$H$ indexed by tuples of integers
and we use the convention that $\HH_0 = 1$ and $\HH_{-r} = 0$ for every $r > 0$.

The {\it forgetful map} $\chi: \Nsym \to \sym$ is defined by sending the basis element  
$\HH_\alpha$ to the complete homogeneous symmetric function 
\begin{equation}\label{def:chi}
\chi(\HH_\alpha) := h_{\alpha_1} h_{\alpha_2} \cdots h_{\alpha_{\ell(\alpha)}} \in \sym,
\end{equation}
and extend it linearly to any element of $\Nsym$. Note that $\chi$ is a surjection onto $\sym$. 
The image by $\chi$ of a non-commutative symmetric function  is called its commutative image.

%%%%%I don't think that we use this
%Similar to the study of $\sym$ and the ring of characters for the symmetric groups, $\Nsym$ is isomorphic to the Grothendieck 
%ring of finitely generated indecomposable projective representations of the $0$-Hecke algebra (see \cite{KT} for details). 
%We  denote the element of $\Nsym$, which corresponds to the indecomposable projective representation indexed by a composition 
%$\alpha$ by $R_\alpha$. The collection of $R_\alpha$ is a basis of $\Nsym$, the so-called \textit{ribbon basis}. 
%They are defined through their expansion in the complete homogeneous basis: 
%\[
% R_\alpha = \sum_{\beta \geq \alpha} (-1)^{\ell(\alpha)-\ell(\beta)} \HH_\beta.
%\]

%%%%%%%%%%%%%%%%%%%%%%%%%%%%%%%%%%%%%%%%%%%%%%%%%%%%%%%%%%%%%%%%%%%%%%%%%%%%%%%%%%%%%%%%%%%%%%%%%%%%%%%%%%%%%%%%%%%%%

\subsection{Quasi-symmetric functions}

The algebra of quasi-symmetric functions $\Qsym$ was introduced in \cite{Ges} 
(see also subsequent references such as \cite{GR, Sta84}). We  
realize $\Qsym$ as the graded Hopf algebra dual to $\Nsym$ which contains $\sym$ as a subalgebra.

If $\Nsym_n$ is the homogeneous subspace of $\Nsym$ spanned by $\{ H_\alpha : \alpha \models n\}$,
then $\Qsym_n$ is the space dual to $\Nsym_n$ and
$$\Qsym = \bigoplus_{n\geq0} \Qsym_n\,.$$
We let $\{M_\alpha: \alpha \models n\}$ 
denote the monomial quasisymmetric basis, dual to the $\{ H_\alpha : \alpha \models n\}$ basis.

%%%%%I don't think that we need this either
%The algebra can be realized within the 
%ring of power series of bounded degree 
%$\mathbb{Q}[\![x_1, x_2, \dots]\!]$, and the {\it monomial 
%quasi-symmetric function indexed by a composition} $\alpha$ is defined as
%\begin{equation}
%    \label{monomial-qsym}
%    M_\alpha = \sum_{i_1 < i_2 < \cdots < i_m} x_{i_1}^{\alpha_1} x_{i_2}^{\alpha_2} \cdots x_{i_m}^{\alpha_m}.
%\end{equation}
%Then, $\Qsym$ is defined as the algebra with the monomial quasi-symmetric functions as a basis.

Note that $\sym$ can be seen as a subalgebra of $\Qsym$.  In fact, given a partition $\lambda$, 
the monomial symmetric function $m_\lambda \in \sym$ can 
be written in terms of the quasi-symmetric monomial functions as:
\[ 
m_\lambda = \sum_{\sort(\alpha) = \lambda} M_\alpha,
\]
where $\sort(\alpha)$ is the partition obtained by reordering the parts of 
$\alpha$ from the largest to the smallest.

The {\it fundamental quasi-symmetric function} $F_\alpha$, indexed by a composition $\alpha$, 
is defined by the following expansion in the monomial quasi-symmetric basis:
\[
F_\alpha = \sum_{\beta \leq \alpha} M_\beta.
\]
The set $\{F_\alpha\}_{\alpha \models n}$ forms a basis of $\Qsym_n$.  In particular we have
$$F_r = \sum_{\alpha \models r} M_\alpha = h_r
\hbox{ and }
F_{1^r} = M_{1^r} = e_r~.$$

%%%%%%%%%%%%%%%%%%%%%%%%%%%%%%%%%%%%%%%%%%%%%%%%%%%%%%%%%%%%%%%%%%%%%%%%%%%%%%%%%%%%%%%%%%%%%%%%%%%%%%%%%%%%%%%%%%%%%

\subsection{Identities relating $\Nsym$ and $\Qsym$}
The algebras $\Nsym$ and $\Qsym$ form graded dual Hopf algebras. The monomial basis of $\Qsym$ 
is dual to the complete homogeneous basis of $\Nsym$.
$\Nsym$ and $\Qsym$ have a pairing $\langle \cdot, \cdot \rangle: \Nsym \times \Qsym \to \mathbb{Q}$, 
defined under this duality where $\langle \HH_\alpha, M_\beta \rangle = \delta_{\alpha, \beta}$.

The operation adjoint to multiplication with respect to the scalar product on $\sym$
can be generalized to $\Nsym$ and $\Qsym$ by using the pairing above. Note that this new operation 
is dual to the multiplication by a quasi-symmetric function. More precisely, for $F \in \Qsym$, 
we denote $F^\perp$ the operator which acts on elements $H \in \Nsym$ according to the following relation 
\begin{equation} \label{perpNsym}
\langle H, F G \rangle = \langle F^\perp H, G \rangle \quad \mbox{for all} \quad G \in \Qsym.
\end{equation}
Given $H \in \Nsym$, as was done with $\sym$, to compute $F^\perp(H)$ we take a basis $\{ A_\alpha \}_\alpha$ 
of $\Qsym$ and $\{ B_\alpha \}_\alpha$ a basis
of $\Nsym$ such that $\langle B_\alpha, A_\beta \rangle = \delta_{\alpha\beta}$, then by \eqref{perpNsym} we get that
\[
F^\perp(H) = \sum_{\alpha} \langle H, F A_\alpha \rangle B_\alpha.
\]

%%%% this is not true, but we  need a few relations from that paper:
%These new operators $F^\perp$ were introduced in \cite[Section 2.5]{BBSSZ}, where some nice algebraic identities on 
%$\Nsym$ and $\Qsym$ were also obtained. 

%%%%%%%%%%%%%%%%%%%%%%%%%%%%%%%%%%%%%%%%%%%%%%%%%%%%%%%%%%%%%%%%%%%%%%%%%%%%%%%%%%%%%%%%%%%%%%%%%%%%%%%%%%%%%%%%%%%%%

\subsection{The immaculate basis of $\Nsym$}
The immaculate basis of $\Nsym$ was introduced in \cite[Section 3]{BBSSZ}. The authors obtained these functions 
as a non-commutative analogue of the Jacobi-Trudi identity (see \cite[Theorem 3.23]{BBSSZ}). 
The immaculate functions have also been defined as the unique functions in $\Nsym$ which satisfy a given positive 
multiplicity free right-Pieri rule (see \cite[Theorem 3.5]{BBSSZ}). In this paper, 
it is more useful for our purposes to choose the construction of the immaculate basis based on a 
non-commutative version of the Bernstein operators. 

\begin{Definition} \cite{BBSSZ}
For $m \in \ZZ$, the {\it non-commutative Bernstein operator} $\BB_m$ is defined as:
\[ 
\BB_m := \sum_{i \ge 0} (-1)^i \HH_{m+i}^L F_{1^i}^\perp.
\]
where we consider $H_m^L$ as the operator on $\Nsym$ that left multiplies by $H_m$.
\end{Definition}

\begin{Remark}
Note that $\BB_m$ clearly deserves to be the non-commutative sibling of the Bernstein operator $\B_m$, 
since the symmetric function $e_i$ is equal to $F_{1^i}$ under the identification of $\sym$ as a 
subalgebra of $\Qsym$.
\end{Remark} 

As expected, in the same way that creation operators are used to construct Schur functions 
(see Theorem \ref{th:bern}), these non-commutative Bernstein operators can be used to 
inductively build non-commutative functions.

\begin{Definition} \cite{BBSSZ} \label{def:immaculate}
For any $\alpha = (\alpha_1, \alpha_2, \cdots, \alpha_m) \in \ZZ^m$, 
the \emph{immaculate function} $\fS_\alpha \in \Nsym$ 
is defined as the composition of the non-commutative Bernstein operators indexed by the entries in 
$\alpha$ applied to $1$. More precisely:
\[
\fS_\alpha := \BB_{\alpha_1} \BB_{\alpha_2} \cdots \BB_{\alpha_m} (1).
\]
\end{Definition}
It was shown in \cite{BBSSZ} that the elements
$\{ \fS_\alpha \}_{\alpha \models n}$ form a basis for $\Nsym_n$, which project onto Schur functions, that is, 
$\chi(\fS_\alpha) = s_\alpha$.
% The relationship between the classical basis of $\Nsym$ 
%and the immaculate basis is also studied.

\subsection{Identities relating the operators $\BB_m$ and $F^{\perp}_r$} \label{identities}  
Some of the identities that we use in this paper appear in \cite{BBSSZ}.  In particular, we  use the
following results which can be derived from equations in that paper.

\begin{Proposition}\label{genidents}  For an integer $m \in {\mathbb Z}$ and $r \geq 0$,
\begin{equation} \label{eq:leftskew}
F_r^\perp \BB_m = \sum_{j=0}^r \BB_{m-j} F_{r-j}^\perp
\end{equation}
and
\begin{equation}
H_m^L = \sum_{r \geq 0} \BB_{m+r} F_r^\perp~. \label{eq:leftmult}
\end{equation}
\end{Proposition}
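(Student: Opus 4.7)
The strategy is to reduce both identities to a single commutation relation
\[
F_r^\perp \, H_m^L = \sum_{k=0}^r H_{m-k}^L \, F_{r-k}^\perp,
\]
which follows from the duality setup in the paper. Under the inclusion $\sym \hookrightarrow \Qsym$ one has $F_r = h_r$, so its coproduct is $\Delta(F_r) = \sum_{k=0}^r F_k \otimes F_{r-k}$, and combined with $\Delta(\HH_m) = \sum_{k=0}^m \HH_k \otimes \HH_{m-k}$ together with $\langle \HH_k, F_j \rangle = \delta_{k,j}$, the general Hopf-algebra identity $F^\perp(XY) = \sum F_{(1)}^\perp(X)\,F_{(2)}^\perp(Y)$ specializes to $F_r^\perp(\HH_m X) = \sum_{k=0}^r \HH_{m-k}\, F_{r-k}^\perp(X)$, with the convention $\HH_\ell = 0$ for $\ell<0$. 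In particular $F_k^\perp(\HH_m) = \HH_{m-k}$.

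For \eqref{eq:leftskew} I would plug the definition $\BB_m = \sum_{i \geq 0}(-1)^i \HH_{m+i}^L F_{1^i}^\perp$ into $F_r^\perp \BB_m$ and push $F_r^\perp$ past each $\HH_{m+i}^L$ using the commutation above, obtaining
\[
F_r^\perp \, \BB_m \;=\; \sum_{i \geq 0}\sum_{k=0}^{r}(-1)^i \HH_{m+i-k}^L \, F_{r-k}^\perp \, F_{1^i}^\perp.
\]
Now $F_{r-k} = h_{r-k}$ and $F_{1^i} = e_i$ both live in the commutative subalgebra $\sym\subseteq\Qsym$, so they commute, and hence their adjoints commute too. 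Relabeling $j = k$ and resuming the $i$-sum packages the inner series into exactly $\BB_{m-j}\, F_{r-j}^\perp$, giving the claim. For \eqref{eq:leftmult} I would expand each $\BB_{m+r}$ by definition and reindex the double sum by $n = r+i$ to get
\[
\sum_{r \ge 0}\BB_{m+r}\, F_r^\perp \;=\; \sum_{n \ge 0}\HH_{m+n}^L\left(\sum_{i=0}^n (-1)^i F_{1^i}^\perp F_{n-i}^\perp\right).
\]
The inner parenthesis is the adjoint of $\sum_{i=0}^n(-1)^i F_{n-i} F_{1^i}=\sum_{i=0}^n(-1)^i h_{n-i} e_i$ in the symmetric subalgebra, which equals $\delta_{n,0}$ by the classical identity $E(-t)H(t)=1$. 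Only the $n=0$ term survives and delivers $\HH_m^L$.

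The main obstacle is purely bookkeeping, namely tracking the shifts of indices through the nested sums and being confident in the convention $\HH_{-r}=0$ when $m$ is small (or negative). Conceptually the two identities are just the coproduct of $h_r$ and the classical Newton-type relation between $h$ and $e$, both transported from $\sym$ into $\Qsym$ via the identifications $F_r = h_r$ and $F_{1^i} = e_i$ and then twisted by the non-commutative Bernstein operator.
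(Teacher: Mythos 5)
Your proof is correct and follows essentially the same route as the paper: both rest on the commutation relation $F_r^\perp H_m^L = \sum_{j=0}^r H_{m-j}^L F_{r-j}^\perp$, then expand $\BB_m$ by definition, commute the mutually commuting operators $F_{r-j}^\perp$ and $F_{1^i}^\perp$, and resum, with the second identity reduced to $\sum_i(-1)^i h_{n-i}e_i=\delta_{n,0}$ exactly as in the paper. The only difference is cosmetic: the paper cites \cite[Lemma 2.4]{BBSSZ} for the commutation relation, while you rederive it from the coproduct of $h_r$, which is a perfectly valid (and self-contained) justification.
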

\begin{proof}
Using \cite[Lemma 2.4]{BBSSZ} we obtain that $F_r^\perp H_m^L = \sum_{j\ge0}^r H_{m-j}^L F_{r-j}^\perp$.
The relation~\eqref{eq:leftskew} is obtained as follows.
\begin{align*}
F_r^\perp \BB_m 
&= \sum_{i \geq 0} F_r^\perp (-1)^i H_{m+i}^L F_{1^i}^\perp 
= \sum_{i \geq 0} \sum_{j \geq 0}^r (-1)^i H_{m+i-j}^L F_{r-j}^\perp F_{1^i}^\perp\\
&= \sum_{j \geq 0}^r \sum_{i \geq 0} (-1)^i H_{m+i-j}^L F_{1^i}^\perp F_{r-j}^\perp 
= \sum_{j \geq 0}^r \BB_{m-j} F_{r-j}^\perp\,.
\end{align*}
Equation~\eqref{eq:leftmult} follows since 
$\sum_{i+j=d} (-1)^j F_i F_{1^j} = \sum_{i+j=d} (-1)^j h_i e_{j} = \delta_{d,0}$ and
\begin{align*}
\sum_{r \geq 0} \BB_{m+r} F_r^\perp = \sum_{r \geq 0} \sum_{i \geq 0} H_{m+r+i}^L (-1)^i F_{1^i}^\perp F_r^\perp 
= \sum_{d \geq 0} \sum_{a+b = d} H_{m+d}^L (-1)^a F_{1^a}^\perp F_b^\perp = H_m^L\,.
\end{align*}
\end{proof}

We start by analyzing the relationship between the non-commutative Bernstein 
operators and the dual (skew) operators.  
As we will see the resulting identities will play a crucial role in the proof of the 
Pieri rule for the dual immaculate basis.
The following identities follow as special cases of the identities in Proposition \ref{genidents}.

\begin{Corollary} The following relations hold for any integers $s>0$, $m>0$ and $r\ge 0$
\begin{equation}\label{eq:rel1}
\sum_{i\geq 0} \BB_i F^\perp_i = \Id~,
\end{equation}
\begin{equation}\label{eq:rel2}
\sum_{i\geq -s} \BB_i F^\perp_{i + s} = 0~,
\end{equation}
\begin{equation}\label{eq:rel3}
\BB_0 = \Id - \sum_{i\geq 1} \BB_i F^\perp_i~,
\end{equation}
\begin{equation}\label{eq:rel4}
\BB_{-s} = - \sum_{i > -s} \BB_i F^\perp_{i + s}~,
\end{equation}
\begin{equation}\label{eq:rel5}
F^\perp_r \BB_m = \sum^r_{i = 0} \BB_{m - i} F^\perp_{r - i} \rlap{\qquad   if  $m>r$~,}
\end{equation}
\begin{equation}\label{eq:rel6}
F^\perp_m \BB_m = \Id -\sum_{i \geq 1} \BB_{m + i} F^\perp_{m + i}~,
\end{equation}
\begin{equation}\label{eq:rel7}
F^\perp_r \BB_m = -\sum_{i \geq 1} \BB_{m + i} F^\perp_{r + i} \rlap{\qquad   if  $m<r$~.}
\end{equation}
\end{Corollary}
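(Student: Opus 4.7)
The plan is to derive all seven identities directly from the two identities already proved in Proposition~\ref{genidents}, namely
\[
F_r^\perp \BB_m = \sum_{j=0}^r \BB_{m-j} F_{r-j}^\perp \qquad\text{and}\qquad H_m^L = \sum_{r\ge 0} \BB_{m+r} F_r^\perp,
\]
together with the conventions $H_0=1$ and $H_{-s}=0$ for $s>0$.

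First I would obtain \eqref{eq:rel1} and \eqref{eq:rel2} by specializing \eqref{eq:leftmult}. Setting $m=0$ gives $\Id = H_0^L = \sum_{r\ge 0}\BB_r F_r^\perp$, which is \eqref{eq:rel1}. Setting $m=-s$ with $s>0$ gives $0 = H_{-s}^L = \sum_{r\ge 0}\BB_{r-s}F_r^\perp$, and after the change of index $i=r-s$ this is \eqref{eq:rel2}. Identities \eqref{eq:rel3} and \eqref{eq:rel4} then follow by isolating the lowest-index term: in \eqref{eq:rel1}, separating $i=0$ and using $F_0^\perp=\Id$ yields $\BB_0 = \Id - \sum_{i\ge 1}\BB_i F_i^\perp$, and in \eqref{eq:rel2}, separating $i=-s$ yields $\BB_{-s} = -\sum_{i>-s}\BB_i F_{i+s}^\perp$.

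Next, \eqref{eq:rel5} is just a restatement of \eqref{eq:leftskew} under the hypothesis $m>r$ (no separate argument is needed, but it is useful to record that in this regime every index $m-j$ appearing on the right is strictly positive). For \eqref{eq:rel6}, I would take $r=m$ in \eqref{eq:leftskew}, obtaining
\[
F_m^\perp \BB_m = \sum_{j=0}^m \BB_{m-j}F_{m-j}^\perp = \BB_0 + \sum_{i=1}^m \BB_i F_i^\perp,
\]
and then substitute the expression for $\BB_0$ from \eqref{eq:rel3}; the finite sum telescopes against the tail of the infinite sum, leaving $\Id - \sum_{i\ge 1}\BB_{m+i}F_{m+i}^\perp$.

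Finally, for \eqref{eq:rel7}, set $s=r-m>0$ and apply \eqref{eq:leftskew}; re-indexing by $i=m-j$ turns the sum into $\sum_{i=-s}^{m}\BB_i F_{i+s}^\perp$. The identity \eqref{eq:rel2} says the full sum from $i=-s$ to $\infty$ vanishes, so the partial sum equals the negative of the tail, giving
\[
F_r^\perp\BB_m = -\sum_{i>m}\BB_i F_{i+s}^\perp = -\sum_{i\ge 1}\BB_{m+i}F_{r+i}^\perp.
\]
The only real obstacle is bookkeeping with the index shifts and sign conventions; no substantive new idea is required beyond the two identities of Proposition~\ref{genidents} and the trick of isolating the extreme term of an infinite sum to solve for $\BB_0$ and $\BB_{-s}$.
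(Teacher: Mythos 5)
Your proposal is correct and follows essentially the same route as the paper: \eqref{eq:rel1}--\eqref{eq:rel5} as specializations of Proposition~\ref{genidents}, then \eqref{eq:rel6} via \eqref{eq:leftskew} with $r=m$ and substitution of \eqref{eq:rel3}, and \eqref{eq:rel7} via \eqref{eq:leftskew} together with the vanishing identity (the paper isolates the $\BB_{m-r}$ term and invokes \eqref{eq:rel4}, which is the same computation as your use of \eqref{eq:rel2} on the re-indexed full sum).
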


\begin{proof} Equation~\eqref{eq:rel1} to~\eqref{eq:rel5} are special cases of Proposition~\ref{genidents}. 
Equation~\eqref{eq:rel6} follows using~\eqref{eq:leftskew} and~\eqref{eq:rel3},
 $$F^\perp_m \BB_m  = \sum_{j\geq0}^m \BB_{m-j} F_{m-j}^\perp = 
 \BB_0+\sum_{j\geq0}^{m-1} \BB_{m-j} F_{m-j}^\perp =  \Id -\sum_{i \geq 1} \BB_{m + i} F^\perp_{m + i}.$$
Similarly, for Equation~\eqref{eq:rel7} we have $m-r<0$. Using~\eqref{eq:leftskew} and~\eqref{eq:rel4}
 $$ F^\perp_r \BB_m = \sum_{j\geq0}^r \BB_{m-j} F_{r-j}^\perp 
 = \BB_{m-r}+ \sum_{j\geq0}^{r-1} \BB_{m-j} F_{r-j}^\perp
 = - \sum_{i > -r} \BB_{m+i} F^\perp_{i + r} +  \sum_{i>-r}^{0} \BB_{m+i} F_{r+i}^\perp$$
\end{proof}

\begin{Remark}
Given $r\geq 0$, Pieri rules for the skew operators $F^\perp_{1^r}$, $F^\perp_r$ were developed in 
\cite[3.6]{BBSSZ}. For the second operator, the following formula was shown in \cite[Proposition 3.35]{BBSSZ}:
\begin{equation} \label{SkewPieri}
F_r^\perp \fS_\alpha = \sum_{\substack{\beta \in \ZZ^{m}\\
i-m\leq\beta_i\leq\alpha_i\\|\beta|=|\alpha|-r}}
\fS_\beta,
\end{equation}
where $\alpha \in \ZZ^m$. 
Note that the immaculate functions appearing on the right hand side of 
\eqref{SkewPieri} need not to be basis elements, 
even in the case that $\alpha$ is a partition. For example, for $r = 2$ and $\alpha = [2, 1]$ 
the formula \eqref{SkewPieri} applies to get that 
\[
F_2^\perp \fS_{21} =  \fS_{01} +  \fS_{10},
\]
and  $\fS_{01}$ is not a basis element. 
Here, we are interested in expressing $F^\perp_r \fS_\alpha$ in terms of the immaculate basis
in a way that shows that the coefficients are $\pm 1$. 
In the example above, our formulae will apply to get that $F_2^\perp \fS_{21} =  \fS_1$. 

Formula~\eqref{SkewPieri} is deduced using relation~\eqref{eq:leftskew} only
and the expression can potentially involve operators $\BB_m$ with $m\le 0$ .
In Section~\ref{sec:proofD} we will make use of the relations~\eqref{eq:rel5}, 
\eqref{eq:rel6} and \eqref{eq:rel7} which involve only operators
$\BB_m$ with $m>0$. This will allow us to compute the exact coefficient in the immaculate basis.
\end{Remark} 

%%%%%%%%%%%%%%%%%%%%%%%%%%%%%%%%%%%%%%%%%%%%%%%%%%%%%%%%%%%%%%%%%%%%%%%%%%%%%%%%%%%%%%%%%%%%%%%%%%%%%%%%%%%%%%%%%%%%%
%%%%%%%%%%%%%%%%%%%%%%%%%%%%%%%%%%%%%%%%%%%%%%%%%%%%%%%%%%%%%%%%%%%%%%%%%%%%%%%%%%%%%%%%%%%%%%%%%%%%%%%%%%%%%%%%%%%%%

%%%%%%%%%%%%%%%%%%%%%%%%%%%%%%%%%%%%%%%%%%%%%%%%%%%%%%%%%%%%%%%%%%%%%%%%%%%%%%%%%%%%%%%%%%%%%%%%%%%%%%%%%%%%%%%%%%%%%

\subsection{The dual immaculate basis}  
Every basis $\{X_\alpha\}$ of $\Nsym_n$ gives rise to a basis $\{Y_\beta\}$ of $\Qsym_n$ defined by duality.
The elements $\{Y_\beta\}$ are the unique elements of $\Qsym_n$ satisfying 
$\langle X_\alpha, Y_\beta \rangle = \delta_{\alpha,\beta}$. 
The dual basis to the immaculate basis of $\Nsym$, 
denoted $\fS_\alpha^*$, was studied in \cite[Section 3.7]{BBSSZ}. 
More properties of dual immaculate functions have been investigated in \cite{BBSSZ1,BBSSZ2}. 

%%%%%%%%%%%%%%%%%%%%%%%%%%%%%%%%%%%%%%%%%%%%%%%%%%%%%%%%%%%%%%%%%%%%%%%%%%%%%%%%%%%%%%%%%%%%%%%%%%%%%%%%%%%%%%%%%%%%%
%%%%%%%%%%%%%%%%%%%%%%%%%%%%%%%%%%%%%%%%%%%%%%%%%%%%%%%%%%%%%%%%%%%%%%%%%%%%%%%%%%%%%%%%%%%%%%%%%%%%%%%%%%%%%%%%%%%%%

\section{The Pieri conjectures}\label{sec:PieriD}

In \cite[Theorem 3.5]{BBSSZ} it was shown that the immaculate basis satisfies 
a positive multiplicity free right Pieri rule. 
Concerning to the left Pieri rule, note that the products of the form 
$H_m \fS_\alpha$ can have negative signs in their expansion in terms of the immaculate basis.  They also conjectured
\cite[Conjecture 3.7]{BBSSZ} that there is a statistic $sign(\alpha, \beta)$ such that
\begin{equation} \label{leftPieriConj}
H_s \fS_\alpha = \sum_\beta (-1)^{sign(\alpha, \beta)}\fS_\beta
\end{equation}
for some collection of compositions $\beta$.
For instance,
\[ 
\HH_2 \fS_{14} = \fS_{214} - \fS_{322} - \fS_{331} - \fS_{421} - \fS_{43} - \fS_{52}. 
\]
It was conjectured in \cite[Remark 3.6]{BBSSZ} that the left Pieri rule is multiplicity free, up to sign.

Similarly, a conjecture about the Pieri rule for the 
dual immaculate basis was stated in \cite[Section 3.7.1]{BBSSZ}. To be more precise: 
\begin{equation} \label{pieridIConj}
F_s \fS_\alpha^* = \sum_\beta (-1)^{sign'(\alpha, \beta)}\fS_\beta^*
\end{equation}
for some collection of $\beta$ and some statistic $sign'$. 
Recall that on the quasisymmetric side $F_s=h_s$. For example, for $s = 2$ and $\alpha = [2,1,2]$ we have that
\begin{equation}\label{eq:examp1}
F_2 \fS^*_{212} = 
- \fS^*_{1312} 
- \fS^*_{142} 
+ \fS^*_{2212}
+ \fS^*_{3112}
+ \fS^*_{322}
+ \fS^*_{412}.
\end{equation}
Our main goal is to prove \eqref{pieridIConj}. The equivalence of \eqref{leftPieriConj} 
and \eqref{pieridIConj} is given by the following computation.
In a private communtication, Darij Grinberg gave us a first proof of the equivalence
and here we provide a different argument which follows from our operator relations.

\begin{Lemma}
For $s \geq 0$ and $r>0$ and for compositions $\alpha$ and $\beta$ such that
$|\alpha| = |\beta| + s$, we have
\begin{equation}\label{eq:transfer}
\left< \fS_\alpha, F_s \fS_\beta^\ast \right> = \left< H_{r} \fS_\alpha, \fS_{(s+r,\beta)}^\ast \right>~.
\end{equation}
\end{Lemma}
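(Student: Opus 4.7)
The plan is to show that both sides of \eqref{eq:transfer} compute the same scalar, namely the coefficient of $\fS_\beta$ in the expansion of $F_s^\perp \fS_\alpha$ in the immaculate basis of $\Nsym_{|\alpha|-s}$. Write this expansion as
$$F_s^\perp \fS_\alpha = \sum_{\gamma \models |\alpha|-s} a_\gamma \fS_\gamma.$$

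For the left-hand side, I would simply invoke the adjunction \eqref{perpNsym} together with the fact that $\{\fS_\gamma^\ast\}$ is the dual basis of $\{\fS_\gamma\}$:
$$\left< \fS_\alpha, F_s \fS_\beta^\ast \right> = \left< F_s^\perp \fS_\alpha, \fS_\beta^\ast \right> = a_\beta.$$

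For the right-hand side, the key tool is identity \eqref{eq:leftmult}, which rewrites left multiplication as $H_r^L = \sum_{k\ge 0}\BB_{r+k} F_k^\perp$. Applying this to $\fS_\alpha$ and expanding each $F_k^\perp \fS_\alpha = \sum_\delta a_{k,\delta}\fS_\delta$ in the immaculate basis (so $a_{s,\delta}=a_\delta$), I use the fact that $r+k > 0$ so that Definition~\ref{def:immaculate} gives $\BB_{r+k}\fS_\delta = \fS_{(r+k,\delta)}$, yielding
$$H_r \fS_\alpha = \sum_{k\ge 0}\sum_{\delta \models |\alpha|-k} a_{k,\delta}\, \fS_{(r+k,\delta)}.$$
Since each composition $(r+k,\delta)$ appearing on the right is determined uniquely by the pair $(k,\delta)$ (the first entry recovers $k$, the tail recovers $\delta$), the pairing with $\fS_{(s+r,\beta)}^\ast$ extracts a single term, forcing $r+k = s+r$ and $\delta = \beta$; hence
$$\left< H_r \fS_\alpha, \fS_{(s+r,\beta)}^\ast \right> = a_{s,\beta} = a_\beta.$$

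There is no real obstacle here — the whole content of the lemma is the operator identity \eqref{eq:leftmult} combined with the observation that prepending a positive part to a composition via $\BB_{r+k}$ is invertible, so that the natural injection $\gamma \mapsto (r+k,\gamma)$ of composition indices does not conflate basis elements. The hypothesis $r>0$ is what ensures every $\BB_{r+k}$ in play acts on composition indices rather than on tuples with a non-positive leading entry, keeping the argument inside the immaculate basis.
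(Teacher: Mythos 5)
Your proposal is correct and follows essentially the same route as the paper: both sides are reduced to the coefficient of $\fS_\beta$ in $F_s^\perp\fS_\alpha$, with the right-hand side handled via the identity $H_r^L=\sum_{k\ge 0}\BB_{r+k}F_k^\perp$ and the observation that $\BB_{r+k}$ prepends the part $r+k$, so only the $k=s$ term can contribute to the coefficient of $\fS_{(s+r,\beta)}$. The paper's proof is just a terser version of the same argument.
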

\begin{proof}
First apply \eqref{eq:leftmult} to show
\begin{align}\label{eq:firsthalf}
\left< H_r \fS_\alpha, \fS^\ast_{(r+s, \beta)} \right>
= \left< \sum_{j \geq 0} \BB_{r+j} F_j^\perp \fS_\alpha, \fS^\ast_{(r+s,\beta)} \right>~.
\end{align}
Now the only terms with the first part of the indexing composition equal to $r+s$ in the expansion of
$\BB_{r+j} F_j^\perp \fS_\alpha$ are those with $j=s$.  In this case, the coefficient
of $\fS_{(r+s,\beta)}$ in $\BB_{r+s} F_s^\perp \fS_\alpha$ is equal to the coefficient
of $\fS_\beta$ in $F_s^\perp \fS_\alpha$.  Hence \eqref{eq:firsthalf} implies
\begin{align*}
\left< H_r \fS_\alpha, \fS^\ast_{(r+s, \beta)} \right>
= \left< \BB_{r+s} F_s^\perp \fS_\alpha, \fS^\ast_{(r+s,\beta)} \right>
= \left<  F_s^\perp \fS_\alpha, \fS^\ast_{\beta} \right>
= \left<  \fS_\alpha, F_s  \fS^\ast_{\beta} \right>~.
\end{align*}
\end{proof}

%The equation~\eqref{eq:transfer}
%shows that any coefficient in the expansion of $F_s \fS_\beta^\ast$ is a coefficient in the expansion of 
%$H_{r} \fS_\alpha$. For the converse, we remark that
%  $$\left< H_{r} \fS_\alpha, \fS_{(\beta_1, \tail(\beta))}^\ast \right> = \left\{
%    \begin{array}{ll} \left< \fS_\alpha, F_{\beta_1-r} \fS_{\tail(\beta)}^\ast \right> 
%& \text{if  $\beta_1\ge r$,}\\  \ &\\0 & \text{otherwise.} \end{array} \right.$$
%The case when $\beta_1\ge r$ follows from~\eqref{eq:transfer}. If $\beta_1<r$, then
%$$ \left< H_r \fS_\alpha, \fS_{(\beta_1, \tail(\beta))}^\ast \right>= \left<  \fS_\alpha,
%\sum_{j \geq 0} F_j \BB_{r+j}^\perp \fS^\ast_{(\beta_1, \tail(\beta))} \right> 
%=0\,.
%$$

Equation \eqref{eq:leftmult} also states that all terms in the expansion of $H_r \fS_\alpha$ have
the first part of the indexing composition greater than or equal to $r$.
Therefore, Equation \eqref{eq:transfer} shows the coefficient in the expansion
$H_r \fS_\alpha$ are the same as the coefficients in the expansion of $F_s^\perp \fS_\beta$. 
Thus, from this point on, we concentrate on proving \eqref{pieridIConj} by giving
an expansion of $F_s^\perp \fS_\alpha$ since by duality, the coefficient of $\fS_\beta^*$ 
in the expansion of $F_s \fS_\alpha^*$ coincides with the coefficient of $\fS_\alpha$ in the expansion of 
$F^\perp_s \fS_\beta$.

In the example in Equation \eqref{eq:examp1}, we can assert that the coefficients of 
$\fS_{212}$ in the expansions of 
$F^\perp_2 \fS_{1312}$, $\, \, F^\perp_2 \fS_{142}$, $\, \, F^\perp_2 \fS_{2212}$, $\, \, F^\perp_2 \fS_{3112}$, 
$\, \, F^\perp_2 \fS_{322}$ and $\, \, F^\perp_2 \fS_{412}$ are $-1, \, -1, \, 1, \, 1, \, 1, \, 1$, 
respectively. In fact:
\[
F^\perp_2 \fS_{1312}  = - \fS_{212}, \qquad F^\perp_2 \fS_{142} = - \fS_{212} - \fS_{221} - \fS_{23} - \fS_{32}, 
\qquad F^\perp_2 \fS_{2212}  = \fS_{212},
\]
\[
F^\perp_2 \fS_{3112} = \fS_{1112} +	\fS_{212}, \qquad F^\perp_2 \fS_{322} 
= \fS_{122} +	\fS_{212} + \fS_{221} +	\fS_{32}, 
\qquad F^\perp_2 \fS_{412} = \fS_{212},
\]
and the immaculate function $\fS_{212}$ does not appear in the expansion of $F^\perp_2 \fS_{\alpha}$, where $\alpha$ a composition of 7 distinct from the previous ones. 

Thus, in order to prove \eqref{pieridIConj} it is enough to show that for any composition $\alpha$ 
and any positive integer $s$, the noncommutative function $F^\perp_s \fS_\alpha$ has 
a signed multiplicity free expansion in the immaculate basis. Moreover, we  also prove that the expansion 
of $F^\perp_s \fS_\alpha$ is positive when $\alpha$ is a partition.

%\begin{Theorem} \label{main theorem}
%Let $\alpha$ be a composition and $s$ a positive integer. Then the coefficients of $F^\perp_s \fS_\alpha$ in the immaculate basis are either 1, $-1$ or 0. Moreover, if $\alpha$ is a partition then the non-zero coefficients of $F^\perp_s \fS_\alpha$ are all 1. 
%\end{Theorem}

%In what follows, we  develop the necessary machinery to prove the theorem above.

% We  distinguish two cases, that will be treated independently, depending on whether $\alpha$ is either a partition or a composition.

%%%%%%%%%%%%%%%%%%%%%%%%%%%%%%%%%%%%%%%%%%%%%%%%%%%%%%%%%%%%%%%%%%%%%%%%%%%%%%%%%%%%%%%%%%%%%%%%%%%%%%%%%%%%%%%%%%%%%
%%%%%%%%%%%%%%%%%%%%%%%%%%%%%%%%%%%%%%%%%%%%%%%%%%%%%%%%%%%%%%%%%%%%%%%%%%%%%%%%%%%%%%%%%%%%%%%%%%%%%%%%%%%%%%%%%%%%%

\section{Proof of the Pieri rule for dual immaculate}\label{sec:proofD} 

As seen in the last example, in general, the application of a skew operator $F^\perp_s$ to an immaculate function 
does not expand positively in the immaculate basis. In the present section, 
we  prove that $F^\perp_s \fS_\lambda$ expands positively in the immaculate basis provided that 
$\lambda$ is a partition. For a composition $\alpha$, we  show that the expansion of 
$F^\perp_s \fS_\alpha$ is multiplicity free.  In other words, the nonzero coefficients of 
$F^\perp_s \fS_\alpha$ in the immaculate basis are either 1 or $-1$.

This is achieved by using the relations~\eqref{eq:rel5}, \eqref{eq:rel6} and \eqref{eq:rel7}. 
Given a composition $\alpha=[\alpha_1,\alpha_2, \ldots, \alpha_m]$
we  write 
$\tail(\alpha)$ to denote the composition obtained from $\alpha$ by dropping the first entry, namely, 
$\tail(\alpha) = [\alpha_2, \ldots, \alpha_m]$.  For $m=1$, $tail(\alpha) = []$. We have
  $$F^\perp_s \fS_\alpha = F^\perp_s \BB_\alpha (1) = F^\perp_s \BB_{\alpha_1} \BB_{\tail(\alpha)} (1) \,.$$
Comparing $s$ and $\alpha_1$ the relations~\eqref{eq:rel5}, \eqref{eq:rel6} and \eqref{eq:rel7} give us three cases:
\begin{equation}\label{eq:Fperples}
F^\perp_s \BB_{\alpha_1} \BB_{\tail(\alpha)} (1)  
= \sum^s_{\beta_1 = 0} \BB_{\alpha_1 - \beta_1} F^\perp_{s-\beta_1}\BB_{\tail(\alpha)} (1) 
\rlap{\qquad if $\alpha_1>s$,}
\end{equation}
\begin{equation}\label{eq:Fperpeq}
F^\perp_s \BB_{\alpha_1} \BB_{\tail(\alpha)} (1) 
= F^\perp_0\BB_{\tail(\alpha)} (1) 
-\sum_{\beta_1 < 0} \BB_{\alpha_1 -\beta_1} F^\perp_{s - \beta_1} \BB_{\tail(\alpha)} (1)
\rlap{\qquad if $\alpha_1=s$,}
\end{equation}
\begin{equation}\label{eq:Fperpmore}
F^\perp_s \BB_{\alpha_1} \BB_{\tail(\alpha)} (1)  
= -\sum_{\beta_1 <0} \BB_{\alpha_1 - \beta_1} F^\perp_{s - \beta_1}\BB_{\tail(\alpha)} (1) 
\rlap{\qquad if $\alpha_1<s$.}
\end{equation}
In Equation~\eqref{eq:Fperpeq} we used $F^\perp_0$ to represent $\Id=F^\perp_0$. 
If we continue to commute the operators $F^\perp$ and $\BB$
we get a formula of the form
\begin{equation}\label{eq:Fperpdev}
F^\perp_s \fS_\alpha = F^\perp_s \BB_{\alpha} (1)  
= \sum_{\beta}\sgn(\beta) \BB_{\comp(\alpha - \beta)} F^\perp_{s - |\beta|}(1) \,,
\end{equation}
where the sum is over $\beta \in \ZZ^m$ that record the change in the index of the $F^\perp$ operator
as it commutes past $\BB_{\alpha_i}$. We represent $\sgn(\beta) = (-1)^{\nn(\beta)}$ with $\nn(\beta)$ 
the number of negative entries in $\beta$, and $\comp(\alpha - \beta)$ is the composition obtained 
from $\alpha - \beta=[\alpha_1-\beta_1,\alpha_2-\beta_2,\ldots,\alpha_m-\beta_m]$ by removing the 
parts that are $0$. We now remark that $F^\perp_{s - |\beta|}(1)=0$ unless $s-|\beta|=0$. 
Furthermore, in the $i$th step of commutation between 
$F^\perp_{s - \beta_1-\beta_2-\cdots-\beta_{i-1}}$ and $\BB_{\alpha_i}$,
we must have $s - \beta_1-\beta_2-\cdots-\beta_{i-1}\ge 0$ since $F_d = 0$ if $d<0$.
 
With the above notation in mind, we define the set $\Z_{s,\alpha}$ consisting of all sequences of integers 
$\beta = (\beta_1, \beta_2, \ldots, \beta_m)$ satisfying the following conditions: 
\begin{itemize}\itemsep = 1mm
\item[\rm(Z1)] $ \beta_1 + \ldots + \beta_m = s$  and $ \beta_1 + \ldots + \beta_i \le s$ for $i<m$,
\item[\rm(Z2)] $\alpha_i - \beta_i\ge 0$ for $1\le i\le m$ and 
$\alpha_i - \beta_i= 0$ for {\bf at most} one $i$ in  $1\le i\le m$
\item[\rm(Z3)] for $i \in \{1, \ldots, m\}$,
 \begin{itemize}\itemsep = 1mm
 \item[\rm(a)]   if $\, \alpha_i > s - (\beta_1 + \ldots + \beta_{i-1})$, then 
 $\, \, 0 \leq \beta_i \leq s - (\beta_1 + \ldots + \beta_{i-1}), \, \,$
 \item[\rm(b)]   if $\, \alpha_i = s - (\beta_1 + \ldots + \beta_{i-1})$, then
  \begin{itemize}\itemsep = 1mm
  \item[\rm(1)] $\, \beta_i = \alpha_i \, \, \mbox{ and } \, \, \beta_{i+1} = \ldots = \beta_m = 0, \, \,$ or 
  \item[\rm(2)] $\, \beta_i < 0, \, \,$ 
  \end{itemize}
  \item[\rm(c)] if $\, \alpha_i < s - (\beta_1 + \ldots + \beta_{i-1})$, then $\, \, \beta_i < 0$.
  \end{itemize}
\end{itemize}
The condition (Z1) is clear from the paragraph above. The conditions in (Z3) depend on which case we use in the 
$i$th step of commutation.
Condition~(Z3a) corresponds to Equation~\eqref{eq:Fperples} and condition (Z3c) corresponds to 
Equation~\eqref{eq:Fperpmore}. For condition~(Z3b)  we have to consider two subcases
depending if we have $\beta_i<0$ (the summation on the right hand side of Equation~\eqref{eq:Fperpeq}) 
or $\beta_i=\alpha_i$ in which case
$F^\perp_{s-\beta_1-\ldots-\beta_i}=F^\perp_0$ (the first term of the right hand side of 
Equation~\eqref{eq:Fperpeq}). We remark that  $\beta_i=\alpha_i$ can occur only once for a given 
$\beta$ and once it occurs, the part $\BB_{\alpha_i}$ is removed and 
$F^\perp_{s-\beta_1-\ldots-\beta_i}=F^\perp_0$ commutes with the remaining $\BB$. 
This explains  condition (Z2) and the fact that $\beta_{i+1} = \ldots = \beta_m = 0$ in this case.
To clarify, we have that every $\alpha_i \geq \beta_i$ 
and there is at most one index $i$ such that $\alpha_i = \beta_i$.
This justifies the definition
of $\comp(\alpha - \beta)$ in Equation~\eqref{eq:Fperpdev}.

Negative terms are introduced only in Equation~\eqref{eq:Fperpmore} and in the summation 
of Equation~\eqref{eq:Fperpeq}. The total sign contribution for a given $\beta$ is exactly the number 
of times (Z3c) or (Z3b2) is involved in each step. That is exactly  the number of negative entries in 
$\beta$, and we now justify why $ \sgn(\beta) = (-1)^{\nn(\beta)}$ in Equation~\eqref{eq:Fperpdev}. 
A direct consequence of Equation~\eqref{eq:Fperpdev} is the following proposition.

\begin{Proposition} \label{Prop:partialPieri} 
\begin{equation}\label{eq:FperpImm}
F^\perp_s \fS_\alpha = \sum_{\beta\in \Z_{s,\alpha}}\sgn(\beta) \fS_{\comp(\alpha - \beta)} \,.
\end{equation}
\end{Proposition}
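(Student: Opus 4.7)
The plan is to prove \eqref{eq:FperpImm} by induction on $m=\ell(\alpha)$, interpreting $\Z_{s,\alpha}$ as the index set for the tree of recursive choices produced when $F^\perp_s$ is iteratively commuted past each Bernstein operator in $\fS_\alpha = \BB_{\alpha_1}\cdots\BB_{\alpha_m}(1)$. The trichotomy in \eqref{eq:rel5}, \eqref{eq:rel6}, \eqref{eq:rel7}—equivalently \eqref{eq:Fperples}--\eqref{eq:Fperpmore}—describes exactly how $F^\perp_t \BB_{\alpha_1}$ decomposes according to whether $\alpha_1 > t$, $\alpha_1 = t$, or $\alpha_1 < t$, and this is precisely what the three cases of (Z3) encode.

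For the base case $\alpha = []$ we have $F^\perp_s(1)=0$ since $s\ge 1$, and $\Z_{s,[]}$ is empty because no empty sum equals $s>0$. For the inductive step I would write $F^\perp_s \fS_\alpha = F^\perp_s \BB_{\alpha_1}\fS_{\tail(\alpha)}$ and apply whichever of \eqref{eq:Fperples}--\eqref{eq:Fperpmore} matches the comparison between $\alpha_1$ and $s$. Each resulting summand has the form $c\,\BB_{\alpha_1-\beta_1}\, F^\perp_{s-\beta_1}\,\fS_{\tail(\alpha)}$ with a prescribed $\beta_1$, except in the stopping case coming from the $F^\perp_0$ term of \eqref{eq:Fperpeq}, where $\beta_1=\alpha_1=s$ yields the bare summand $\fS_{\tail(\alpha)}$. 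The sign $c$ matches (Z3): positive when $\beta_1\ge 0$ (cases (Z3a), (Z3b1)) and negative when $\beta_1<0$ (cases (Z3b2), (Z3c)). Applying the inductive hypothesis to $F^\perp_{s-\beta_1}\fS_{\tail(\alpha)}$ expands it as a signed sum over $\tilde\beta\in\Z_{s-\beta_1,\,\tail(\alpha)}$, and concatenation then produces $\beta = (\beta_1,\tilde\beta)$. Verifying (Z1)--(Z3) on this concatenation is routine: (Z1) follows since $\beta_1+|\tilde\beta|=s$, and the partial-sum bound $\beta_1+\cdots+\beta_{i-1}\le s$ is forced by the requirement that each intermediate $F^\perp$ operator have nonnegative subscript (otherwise the skew operator vanishes); (Z3) is read directly off which of the three commutation rules was used at each stage; and $\sgn(\beta)=(-1)^{\nn(\beta)}$ records exactly the number of negative-$\beta_i$ steps, since no sign is introduced in (Z3a) or (Z3b1). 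That $\comp(\alpha-\beta)$ correctly indexes the output follows because the surviving Bernstein operators compose to give $\fS_{\comp(\alpha-\beta)}$, with zero indices removed.

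The main technical delicacy is the stopping case (Z3b1): once $\beta_i=\alpha_i=s-\sum_{j<i}\beta_j$, the operator $F^\perp_0=\Id$ commutes trivially with the remaining $\BB_{\alpha_j}$ and yields $\fS_{(\alpha_{i+1},\ldots,\alpha_m)}$, forcing $\beta_{i+1}=\cdots=\beta_m=0$ and preventing any other $\alpha_j$ from coinciding with its $\beta_j$. This is precisely condition (Z2), which guarantees that $\comp(\alpha-\beta)$ removes exactly the single zero entry at the stopping index while retaining the remaining $\alpha_j$'s as nonzero parts. Once the bijection between elements of $\Z_{s,\alpha}$ and sequences of commutation choices is confirmed with this subtlety in mind, the sign count and the indexing of the immaculate basis elements both fall into place, closing the induction.
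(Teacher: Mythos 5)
Your proposal is correct and follows essentially the same route as the paper: the paper also obtains \eqref{eq:FperpImm} by iteratively commuting $F^\perp$ past each $\BB_{\alpha_i}$ via \eqref{eq:Fperples}--\eqref{eq:Fperpmore}, reading off (Z1)--(Z3) and the sign $(-1)^{\nn(\beta)}$ from which of the three rules is used at each step, with the $F^\perp_0$ term of \eqref{eq:Fperpeq} accounting for (Z2) and the removal of the zero part in $\comp(\alpha-\beta)$. Your packaging of this as a formal induction on $\ell(\alpha)$ is just a tidier presentation of the paper's argument, not a different one.
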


\begin{Remark}
Proposition~\ref{Prop:partialPieri} is not a cancelation free formula and hence 
it does not show yet the desired result. 
For example, if one has $\alpha=[5,1,3,7]$
and $s=2$, then the vectors $(1,1,0,0)$, $(1,-2,3,0)$ and $(1,-2,-4,7)$ belong to $\Z_{s,\alpha}$. One has 
  $$[4,3,7]=\comp\big(\alpha-(1,1,0,0)\big)=\comp\big(\alpha-(1,-2,3,0)\big)= \comp\big(\alpha-(1,-2,-4,7)\big).$$
Notice then that $1=\sgn(1,1,0,0)=-\sgn(1,-2,3,0)=\sgn(1,-2,-4,7)$. Two of these terms cancel and the coefficient of 
$\fS_{[4,3,7]}$ in Equation~\eqref{eq:FperpImm} with $s=2$ and $\alpha=[5,1,3,7]$ is
$1$.
\end{Remark}

We now concentrate the coefficient of $\fS_\gamma$ in Equation~\eqref{eq:FperpImm} by considering the
following subset of $\Z_{s,\alpha}$. Let
  $$\Z_{s,\alpha}^\gamma=\big\{ \beta\in \Z_{s,\alpha} \big| \comp(\alpha-\beta)=\gamma\big\}\,.$$
Equation~\eqref{eq:FperpImm}  can now be written as
\begin{equation}\label{eq:FperpImm2}
F^\perp_s \fS_\alpha = \sum_{\gamma\models |\alpha|-s} 
\left( \sum_{\beta\in \Z_{s,\alpha}^\gamma}\sgn(\beta)\right) \fS_{\gamma} \,.
\end{equation}
Hence to find a cancellation free formula, we focus our attention on the set $\Z_{s,\alpha}^\gamma$. 
To begin, we remark that in view of condition~(Z2) we have
	$$ \ell(\alpha)-1 \le \ell(\comp(\alpha-\beta)) \le \ell(\alpha)\,.$$
If $\ell(\gamma)=\ell(\alpha)$, then clearly $\beta=\alpha-\gamma$ is uniquely determined and if such 
$\beta\in\Z_{s,\alpha}$ then the condition~(Z3b1) does not occur for such $\beta$. We thus have
	$$ \ell(\gamma)=\ell(\alpha) \quad\implies\quad \big|\Z_{s,\alpha}^\gamma\big|\le 1\,.$$

\begin{Proposition} \label{prop:coefset}
Given $s>0$, $\, \alpha=[\alpha_1,\ldots,\alpha_m]$ and $\gamma\models |\alpha|-s$, we have
\begin{itemize}\itemsep = 1mm
\item[\rm (A1)] If $\ell(\gamma)<\ell(\alpha)-1$ or $\ell(\gamma)>\ell(\alpha)$, then \quad 
$\Z_{s,\alpha}^\gamma=\emptyset$;
\item[\rm (A2)] If $\ell(\gamma)=\ell(\alpha)$ and $\Z_{s,\alpha}^\gamma\ne \emptyset$, then \quad 
$\Z_{s,\alpha}^\gamma=\big\{ \alpha-\gamma \big\}$;
\item[\rm (A3)] If $\ell(\gamma)=\ell(\alpha)-1$ and $\Z_{s,\alpha}^\gamma\ne \emptyset$, then let 
$1\le k\le m$ be the smallest integer such that $\alpha_i=\gamma_{i-1}$ for all $k<i\le m$ where 
$k=m$ when $\alpha_m\ne\gamma_{m-1}$. Let $k\le r\le m$ be the largest integer such that 
$\alpha_k <\alpha_{k+1}<\cdots<\alpha_r$. We have in this case
\begin{align*}\Z_{s,\alpha}^\gamma=\big\{ 
  	&(\alpha_1-\gamma_1,\ldots,\alpha_{k-1}-\gamma_{k-1},\alpha_k,0,\ldots,0), \\
	&(\alpha_1-\gamma_1,\ldots,\alpha_{k-1}-\gamma_{k-1},\alpha_k-\alpha_{k+1},\alpha_{k+1},0,\ldots,0),\\
	&\cdots \\
	&(\alpha_1-\gamma_1,\ldots,\alpha_{k-1}-\gamma_{k-1},\alpha_k-\alpha_{k+1},\ldots,
	\alpha_{r-1}-\alpha_{r},\alpha_r,0,\ldots,0)
  \big\}
  \end{align*}
\end{itemize}
\end{Proposition}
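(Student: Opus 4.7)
The proof will rest on a careful case analysis of conditions (Z1)--(Z3) together with the identity $|\gamma| = |\alpha| - s$. For (A1), condition (Z2) implies that $\comp(\alpha - \beta)$ is obtained from $\alpha - \beta$ by deleting at most one zero entry, so $\ell(\comp(\alpha-\beta)) \in \{\ell(\alpha)-1, \ell(\alpha)\}$; any $\gamma$ with length outside this range forces $\Z_{s,\alpha}^\gamma = \emptyset$. For (A2), when $\ell(\gamma) = \ell(\alpha)$ no entry of $\alpha - \beta$ is zero, so $\gamma = \alpha - \beta$ determines $\beta = \alpha - \gamma$ uniquely.

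The bulk of the work is (A3), which I parametrize by the unique index $k'$ at which $\alpha_{k'} = \beta_{k'}$. Case (Z3b1) at position $k'$ forces $\beta_{k'+1} = \cdots = \beta_m = 0$, so $\gamma_{i-1} = \alpha_i$ for every $i > k'$; minimality of $k$ then gives $k' \ge k$. For $i < k'$ the position is preserved, giving $\beta_i = \alpha_i - \gamma_i$, which for $k \le i < k'$ simplifies to $\alpha_i - \alpha_{i+1}$ via the tail condition $\gamma_i = \alpha_{i+1}$.

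The crucial step is a telescoping calculation: combining $|\gamma|=|\alpha|-s$ with $\gamma_j = \alpha_{j+1}$ for $j \ge k$ yields
\begin{equation*}
s - \sum_{j=1}^{i-1} \beta_j \,=\, \alpha_i \qquad \text{for every } k \le i \le k',
\end{equation*}
placing each such step in case (Z3b). At $i = k'$ this is exactly (Z3b1), while at $k \le i < k'$ we must be in (Z3b2) because (Z2) allows $\alpha_i = \beta_i$ at only one index; hence $\beta_i = \alpha_i - \alpha_{i+1} < 0$, giving the chain $\alpha_k < \alpha_{k+1} < \cdots < \alpha_{k'}$ and therefore $k' \le r$.

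For the converse, for each $k' \in \{k, k+1, \ldots, r\}$ the candidate $\beta^{(k')}$ displayed in the statement satisfies the Z-conditions at all positions $i \ge k$ by the same analysis, and the entries $\beta_i = \alpha_i - \gamma_i$ for $i < k$ are identical across all $k'$. Thus the hypothesis $\Z_{s,\alpha}^\gamma \ne \emptyset$ guarantees the Z-conditions hold on the initial block $(\beta_1, \ldots, \beta_{k-1})$ for every choice of $k'$, completing the enumeration. The main obstacle is setting up and verifying the telescoping identity $s - \sum_{j<i}\beta_j = \alpha_i$, since this is where the global constraint $|\gamma|=|\alpha|-s$ interacts with the local (Z3) dichotomy; the edge case $k = m$ (when $\alpha_m \ne \gamma_{m-1}$) reduces to the single removal $k' = m$ and is handled by exactly the same formulas.
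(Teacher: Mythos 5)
Your proposal is correct and follows essentially the same route as the paper: both arguments pin down the unique ``jump'' index $k'$ with $\beta_{k'}=\alpha_{k'}$, use the tail condition $\gamma_j=\alpha_{j+1}$ to get $\beta_j=\alpha_j-\alpha_{j+1}$ on the middle block, and use the telescoping identity $s-\sum_{j<i}\beta_j=\alpha_i$ to force case (Z3b) at each such step, whence (Z2) forces (Z3b2) and the strict chain $\alpha_k<\cdots<\alpha_{k'}$ confining $k'$ to $[k,r]$. The only cosmetic difference is that the paper generates the list by an iterative one-step shift (its Equation~\eqref{eq:betaprime}) whereas you verify each candidate directly, which amounts to the same computation.
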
	
	
\begin{proof}
The cases (A1) and (A2) are clear from the discussion before the statement of the Proposition. In the case~(A3), 
using conditions~(Z2) and (Z3b1), we know that if $\Z_{s,\alpha}^\gamma\ne \emptyset$,
then for each $\beta\in \Z_{s,\alpha}^\gamma$ exactly one entry  $\beta_j=\alpha_j$ for some 
$1\le j\le m$ and $\beta_i=0$ for all $j<i\le m$. That is
   \begin{equation}\label{eq:mybeta} 
        \beta=(\beta_1,\ldots,\beta_{j-1},\alpha_j,0,\ldots,0).
   \end{equation}
This means
    \begin{equation}\label{eq:compbeta}  
         \gamma =\comp(\alpha-\beta)=[\alpha_1-\beta_1,\ldots,\alpha_{j-1}-\beta_{j-1},\alpha_{j+1},
         \ldots,\alpha_m]\,.
    \end{equation}
What we will do is show that $j$ must be between $k$ and $r$ in order for this to happen.
    
Let $\beta\in \Z_{s,\alpha}^\gamma$ and $j$ as in Equation~\eqref{eq:mybeta}. If $\alpha_j<\alpha_{j+1}$, then
\begin{equation}\label{eq:betaprime}
\beta\in \Z_{s,\alpha}^\gamma \quad\implies\quad \beta'=(\beta_1,\ldots,\beta_{j-1},\alpha_j-\alpha_{j+1},
\alpha_{j+1},0,\ldots,0)\in \Z_{s,\alpha}^\gamma\,.
\end{equation}
This follows from the fact that  if the conditions (Z1)--(Z3) are satisfied for $\beta_i$ for $1\le i\le m$, 
then the conditions (Z1)--(Z3) are also trivially satisfied for $\beta'_i$ for $1\le i<j$ and for $j+1<i\le m$. 
For the step $j$, we have $\beta_j=\alpha_j$ and this implies we are in condition~(Z3b1). 
Hence $\alpha_j=s-\beta_1-\cdots-\beta_{j-1}$. Instead of using (Z3b1) we could use (Z3b2) and $\beta'_j$ 
could be any negative integer.
We choose $\beta'_j=\alpha_j-\alpha_{j+1}<0$ since $\alpha_j<\alpha_{j+1}$. 
Now in step $j+1$ for $\beta'_{j+1}$, we have 
  $$s-\beta'_1-\cdots-\beta'_j= s-\beta_1-\cdots-\beta_{j-1} -\alpha_j+\alpha_{j+1} 
  =\alpha_j-\alpha_j+\alpha_{j+1}=\alpha_{j+1}\,.$$
Hence for $\beta'_{j+1}$ we are in condition~(Z3b) again. We can choose 
$\beta'_{j+1}=\alpha_{j+1}$ and the remaining entries to be $0$.
Finally, 
  $$\comp(\alpha-\beta')=\comp(\alpha_1-\beta_1,\ldots,\alpha_{j-1}-\beta_{j-1},
  \alpha_j-\alpha_j+\alpha_{j+1},0,\alpha_{j+2},\cdots,\alpha_m)=\gamma\,.$$
This shows the claim in Equation~\eqref{eq:betaprime}. 

We remark that if $\alpha_j\ge\alpha_{j+1}$, 
then it is not possible to find $\beta'\in \Z_{s,\alpha}^\gamma$ such that $\beta'_p=\alpha_p$ for $p>j$. 
Indeed, using Equation~\eqref{eq:compbeta},
  \begin{align*}
     [\alpha_1-\beta_1,\ldots,\alpha_{j-1}-\beta_{j-1},\alpha_{j+1},\ldots,\alpha_m]&=\gamma=\comp(\alpha-\beta')\\
      &=[\alpha_1-\beta'_1,\ldots,\alpha_{p-1}-\beta'_{p-1},\alpha_{p+1},\ldots,\alpha_m]
   \end{align*}   
       and $p>j$ implies that $\beta_i=\beta'_i$ for $1\le i<j$ and $\alpha_{j+1}=\alpha_j-\beta'_j$. This gives
  $$s-\beta'_1-\cdots-\beta'_{j-1}=s-\beta_1-\cdots-\beta_{j-1}=\alpha_j$$
so $\beta'_j$ must satisfy condition~(Z3b). But we see above that 
$\beta'_j=\alpha_{j}-\alpha_{j+1}\ge 0$ a contradiction to condition~(Z3b).
This shows that using Equation~\eqref{eq:betaprime} repeatedly we can get vector  
$\beta'\in \Z_{s,\alpha}^\gamma$ such that $\beta'_r=\alpha_r$
for $j<r$ as long as $\alpha_j<\alpha_{j+1}<\ldots<\alpha_{r}$ but no further if $\alpha_r\ge\alpha_{r+1}$.

Let us go back to $\beta\in \Z_{s,\alpha}^\gamma$ and $j$ as in Equation~\eqref{eq:mybeta}. 
From the definition of $k$ in (A3), it is clear that $k\le j$. Also, using Equation~\eqref{eq:compbeta},
we have $\alpha_i=\gamma_{i-1}=\alpha_{i-1}-\beta_{i-1}$ for all $k<i\le j$. This gives us that
  $$\beta_{i-1}=\alpha_{i-1}-\alpha_i,\quad\hbox{for all }k<i\le j.$$
Given that $s-\beta_1-\ldots-\beta_{j-1}=\alpha_j$ we get that for all $k<i\le j$
  $$s-\beta_1-\beta_{i-1}=\alpha_j+\beta_{j-1}+\cdots+\beta_{i}=\alpha_j+\alpha_{j-1}-\alpha_j+
  \cdots+\alpha_i-\alpha_{i+1}=\alpha_i\,.$$
This gives us that $\beta_k,\beta_{k+1},\ldots,\beta_j$ must satisfy condition~(Z3b). 
Since none of $\beta_{i-1}=\alpha_{i-1}$ for $i\le j$ we must have
  $$\alpha_{i-1}-\alpha_i =\beta_{i-1} <0\,.$$
 Hence $\alpha_k<\alpha_{k+1}<\cdots<\alpha_j$. It is clear now that the vector 
 $(\alpha_1-\gamma_1,\ldots,\alpha_{k-1}-\gamma_{k-1},\alpha_k,0,\ldots,0)\in
 \Z_{s,\alpha}^\gamma$. 
  This shows that all the vectors to the right hand side of (A3) are contained in $\Z_{s,\alpha}^\gamma$ 
  and no vector $\beta'\in\Z_{s,\alpha}^\gamma$ will satisfy $\beta'_p=\alpha_p$ for $p>r$.
 
Condition (A3) will follow once we show that no $\beta'\in \Z_{s,\alpha}^\gamma$ has $\beta'_p=\alpha_p$ for $p<k$. 
But if such $p$ exists it would imply that
$\beta'_i=0$ for $i>p$. Then $\gamma_i=\alpha_{i+1}-\beta_{i+1}=\alpha_{i+1}$ for all $p<i$. 
This is a contradiction to the minimality of $k$ since $p<k$.
\end{proof}

We are now ready to state and prove our main theorem. Let
$$ c_{s,\alpha}^\gamma =  \sum_{\beta\in \Z_{s,\alpha}^\gamma}\sgn(\beta) \,.
$$
That is $c_{s,\alpha}^\gamma$ is the coefficient of $\fS_\gamma$ in Equation~\eqref{eq:FperpImm2}.
\begin{Theorem} \label{compoThm}
For a positive integer $s$ and a composition $\alpha$, 
\begin{equation} \label{formula}
F^\perp_s \fS_\alpha = \sum_{\gamma\models |\alpha|-s} c_{s,\alpha}^\gamma \fS_{\gamma}\,,
\end{equation}
where
\begin{equation*}
  \label{basic}
  c_{s,\alpha}^\gamma =
  \left\{
  \begin{array}{ll}
  \sgn(\alpha-\gamma) \, \,  & \mbox{if  \ $\ell(\gamma)=\ell(\alpha)$ and }\\
 & \mbox{$\alpha-\gamma$ satisfies conditions (Z1)--(Z3)},\\
   \\[3pt]
 \sgn(\alpha_1-\gamma_1,\ldots,\alpha_{k-1}-\gamma_{k-1}) \, \,   \, \,  
& \mbox{if  \ $\ell(\gamma)=\ell(\alpha)-1$, if $r-k$ is even, and}\\
& \mbox{$(\alpha_1-\gamma_1,\ldots,\alpha_{k-1}-\gamma_{k-1},\alpha_k,0\ldots,0) $}\\
& \mbox{satisfies conditions (Z1)--(Z3)}
   \\[3pt]
0 \, \,  & \mbox{otherwise}~.
 \end{array}
  \right. 
  \end{equation*}
  Here, $r,k$ depend on $\alpha$ and $\gamma$ as defined in Proposition~\ref{prop:coefset} (A3).
\end{Theorem}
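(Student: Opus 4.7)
The plan is to prove Theorem~\ref{compoThm} by combining Equation~\eqref{eq:FperpImm2} with the explicit description of $\Z_{s,\alpha}^\gamma$ provided in Proposition~\ref{prop:coefset}. The coefficient to compute is
\begin{equation*}
c_{s,\alpha}^\gamma \ =\ \sum_{\beta \in \Z_{s,\alpha}^\gamma} \sgn(\beta),
\end{equation*}
so the task reduces to evaluating this signed sum in each of the three length regimes for $\gamma$.

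First I would dispose of the trivial length cases. When $\ell(\gamma) < \ell(\alpha) - 1$ or $\ell(\gamma) > \ell(\alpha)$, part (A1) of Proposition~\ref{prop:coefset} gives $\Z_{s,\alpha}^\gamma = \emptyset$ and hence $c_{s,\alpha}^\gamma = 0$. When $\ell(\gamma) = \ell(\alpha)$, part (A2) gives $\Z_{s,\alpha}^\gamma = \{\alpha - \gamma\}$ precisely when that single vector satisfies (Z1)--(Z3), and the empty set otherwise. In this case $c_{s,\alpha}^\gamma$ is either $\sgn(\alpha - \gamma)$ or zero, matching the first and third cases of the theorem.

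The main step is the length-decreasing case $\ell(\gamma) = \ell(\alpha) - 1$. Here part (A3) explicitly lists $\Z_{s,\alpha}^\gamma$ as a family of $r - k + 1$ vectors, all sharing a common prefix $(\alpha_1 - \gamma_1, \ldots, \alpha_{k-1} - \gamma_{k-1})$. I would count negative entries vector-by-vector: the common prefix contributes the same fixed factor $\sgn(\alpha_1 - \gamma_1, \ldots, \alpha_{k-1} - \gamma_{k-1})$ to every summand, and the $j$-th vector ($j = 0, 1, \ldots, r-k$) has exactly $j$ additional negative entries coming from the strictly negative differences $\alpha_{k+i-1} - \alpha_{k+i}$ in the chain $\alpha_k < \alpha_{k+1} < \cdots < \alpha_{k+j}$. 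The trailing entry $\alpha_{k+j}$ and the subsequent zeros are nonnegative and contribute nothing to $\nn$. Therefore $\sgn$ of the $j$-th listed vector equals $\sgn(\alpha_1 - \gamma_1, \ldots, \alpha_{k-1} - \gamma_{k-1}) \cdot (-1)^j$, and summing over $j$ collapses the coefficient to
\begin{equation*}
c_{s,\alpha}^\gamma \ =\ \sgn(\alpha_1 - \gamma_1, \ldots, \alpha_{k-1} - \gamma_{k-1}) \sum_{j=0}^{r-k} (-1)^j,
\end{equation*}
which equals $\sgn(\alpha_1 - \gamma_1, \ldots, \alpha_{k-1} - \gamma_{k-1})$ if $r - k$ is even and $0$ if $r - k$ is odd, exactly as claimed.

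The real obstacle is conceptual and has already been overcome in Proposition~\ref{prop:coefset}, which identifies precisely which vectors in $\Z_{s,\alpha}$ compress to a prescribed composition $\gamma$. Once that structure is in hand, the remaining argument is pure bookkeeping on the parity of negative entries, plus the telescoping of the alternating geometric sum. The only subtlety that needs explicit verification is that the strict ascent $\alpha_k < \alpha_{k+1} < \cdots < \alpha_r$ built into (A3) is precisely what forces each new middle difference to flip the sign, so that the $r - k + 1$ sibling terms pair up and cancel two at a time, leaving the single surviving contribution when $r - k$ is even.
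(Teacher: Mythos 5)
Your proposal is correct and follows essentially the same route as the paper: cases (A1) and (A2) are read off directly from Proposition~\ref{prop:coefset}, and in case (A3) you observe, as the paper does, that consecutive vectors in the listed set differ by exactly one negative entry, so the signs alternate and the sum collapses to $\sgn(\alpha_1-\gamma_1,\ldots,\alpha_{k-1}-\gamma_{k-1})\sum_{j=0}^{r-k}(-1)^j$. Your explicit identification of the $j$ extra negative entries as the strictly negative differences $\alpha_{k+i-1}-\alpha_{k+i}$ forced by the ascent $\alpha_k<\cdots<\alpha_r$ is exactly the justification the paper's proof relies on.
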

\begin{proof}
This is a direct application of Proposition~\ref{prop:coefset}. 
Condition (A1) implies that $c_{s,\alpha}^\gamma=0$ if $\ell(\gamma)<\ell(\alpha)-1$ or 
$\ell(\gamma)>\ell(\alpha)$. In case of (A2), if non-empty, the set $\Z_{s,\alpha}^\gamma$ 
contains a unique vector $\alpha-\gamma$, hence $c_{s,\alpha}^\gamma=\sgn(\alpha-\gamma)$ when 
$\ell(\gamma)=\ell(\alpha)$. The case (A3) is slightly more interesting. Let $r,k$ be as in (A3). 
If non-empty, the set $\Z_{s,\alpha}^\gamma$ contains exactly $r-k+1$ vectors. 
The difference of negative components between two consecutive vectors of $\Z_{s,\alpha}^\gamma$ 
as listed in (A3) is exactly one. This shows that in this case
\begin{align*}
   c_{s,\alpha}^\gamma = \sgn(\alpha_1-\gamma_1,\ldots,\alpha_{k-1}-\gamma_{k-1},\alpha_k,0,\ldots,0)
   \big(1 -1 +1 -\cdots +(-1)^{r-k}\big)\,.
\end{align*}
The result follows depending on the parity of $r-k$.
\end{proof}

We conclude our study by considering the special case where $\alpha$ is a partition. 
In this case, all coefficients are positive as we will see below.

\begin{Lemma} \label{partiZs}
Let $\lambda$ be a partition, and $s$ a positive integer. 
\begin{itemize}\itemsep = 1mm
\item[\rm(i)] If $\lambda_1 < s$, then $\, \Z_{s,\lambda} = \emptyset$.
\item[\rm(ii)] If $\lambda_1 = s$, then $\, \Z_{s,\lambda} = \{(s, 0, \ldots, 0) \}$.
\item[\rm(iii)] If $\lambda_1 > s$, then $\, \sgn(\beta) = 1$ for all $\, \beta \in \Z_{s, \lambda}$.
\end{itemize}
\end{Lemma}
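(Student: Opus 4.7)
The plan is to exploit the hypothesis that $\lambda$ is a partition, so that $\lambda_1 \geq \lambda_2 \geq \cdots \geq \lambda_m$, via the case analysis in conditions (Z1)--(Z3). The key observation, which I would isolate as a \emph{propagation lemma}, is that if $\beta \in \Z_{s,\lambda}$ satisfies $\beta_i < 0$ for some index $i$, then $\beta_j < 0$ for every $j \geq i$. Indeed, a negative $\beta_i$ can arise only from case (Z3b2) or (Z3c), in both of which $\alpha_i \leq s - (\beta_1 + \cdots + \beta_{i-1})$; consequently
\[
s - (\beta_1 + \cdots + \beta_i) = \bigl(s - \beta_1 - \cdots - \beta_{i-1}\bigr) - \beta_i > \alpha_i \geq \alpha_{i+1},
\]
using the partition condition in the last inequality. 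Thus step $i+1$ falls into case (Z3c), forcing $\beta_{i+1} < 0$, and induction completes the claim. Note that case (Z3b1) cannot interrupt this chain, since (Z3b1) would reset the remaining coordinates to $0$, contradicting the negativity being propagated.

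With this lemma in hand, parts (i) and (ii) follow quickly. For (i), $\lambda_1 < s$ puts step~$1$ into case (Z3c), whence $\beta_1 < 0$, and propagation gives $\beta_j < 0$ for all $j$. Then $\beta_1 + \cdots + \beta_m < 0 \neq s$, violating (Z1), so $\Z_{s,\lambda} = \emptyset$. For (ii), $\lambda_1 = s$ puts step~$1$ into case (Z3b). Subcase (Z3b1) yields $\beta = (s, 0, \ldots, 0)$, which one immediately checks satisfies (Z1)--(Z3). Subcase (Z3b2) gives $\beta_1 < 0$; by propagation every $\beta_j < 0$, so the total sum is negative, again contradicting (Z1). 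Hence $\Z_{s,\lambda} = \{(s, 0, \ldots, 0)\}$.

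For (iii), assume $\lambda_1 > s$ and let $\beta \in \Z_{s,\lambda}$. Step~$1$ is in case (Z3a), so $\beta_1 \geq 0$. Suppose toward contradiction that some $\beta_i < 0$, and choose $i$ minimal (so $i \geq 2$ and $\beta_1, \ldots, \beta_{i-1} \geq 0$). By (Z1), $\beta_1 + \cdots + \beta_{i-1} \leq s$. By the propagation lemma, $\beta_j < 0$ for every $j \geq i$, so $\beta_i + \cdots + \beta_m < 0$; combined with the total-sum equality $\beta_1 + \cdots + \beta_m = s$, this forces $\beta_1 + \cdots + \beta_{i-1} > s$, a contradiction. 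Therefore every $\beta_j \geq 0$, so $\nn(\beta) = 0$ and $\sgn(\beta) = 1$.

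The main obstacle is the careful bookkeeping needed to pin down the propagation lemma --- in particular verifying that (Z3b1) cannot appear in the middle of a run of negatives, and tracking the strict versus weak inequalities in the partition chain $\lambda_i \geq \lambda_{i+1}$. Once that lemma is established, each of (i)--(iii) reduces to a one-line sum argument against (Z1).
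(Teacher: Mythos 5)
Your proof is correct and follows essentially the same route as the paper: the paper also argues that, because $\lambda$ is weakly decreasing, a first negative entry forces all subsequent steps into case (Z3c), so every later entry is negative and (Z1) is violated; you have merely isolated this as an explicit propagation lemma rather than repeating it inline in each of (i)--(iii). The details (the inequality $s-(\beta_1+\cdots+\beta_i)>\alpha_i\ge\alpha_{i+1}$, the exclusion of (Z3b2) at step $1$ in parts (i)--(ii), and the sum argument against (Z1)) all check out.
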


\begin{proof}
Let $\lambda$ be a partition of length $m$, $s$ a positive integer and $\beta \in \Z_{s, \lambda}$.
  
(i). From (Z3c), since $\lambda_1<s$, we must have that $\beta_1 < 0$. 
Using again (Z3c) for $i=2,\ldots,m$ successively, we have 
$\lambda_i \leq \lambda_1 < s < s - (\beta_1 + \ldots + \beta_{i-1})$, which yields that $\beta_i < 0$. 
Thus, all the entries in $\beta$ are negative integers. This is in opposition  to (Z1) that requires 
the sum  to be $s$. Therefore $\Z_{s,\lambda} = \emptyset$. 

(ii). Using (Z3b) gives that either $\beta = (s, 0, \ldots, 0)$ or $\beta_1 < 0$. 
Reasoning, as in (i), we conclude that $\beta_1 < 0$ is not possible. Hence $\Z_{s,\lambda} = \{(s,0,\ldots,0)\}$. 

(iii). If $\lambda_1 > s$, then (Z3a) applies to give $0\leq \beta_1 \leq s$. 
Next, we compare $\lambda_2$ with $s - \beta_1$.
If $\lambda_2 < s - \beta_1$, then  (Z3c) would imply $\beta_2<0$, which reasoning as before, 
yields that $\beta_i < 0$ for $i > 1$. In such case, $\beta_1+\cdots+\beta_m<\beta_1<s$ since 
$0\le\lambda_2<s-\beta_1$. This contradicts (Z1) and we cannot have $\lambda_2 < s - \beta_1$. 
When  $\lambda_2 = s - \beta_1$, the only possibility is $\beta_2 = s - \beta_1$, $\beta_3 = 
\ldots = \beta_m = 0$.
When $\lambda_2 > s - \beta_1$ we have  (Z3a) implies $0\leq \beta_2 \leq s - \beta_1$. 
Continuing successively  this procedure for $i=3,\ldots,m$, we show that all the entries in 
$\beta$ are non-negative integers, and therefore $\sgn(\beta) = 1$.  
\end{proof}

A direct consequence of Lemma~\ref{partiZs} is the following result
\begin{Corollary} 
Let $\lambda$ be a partition, and $s$ a positive integer. We have
\begin{equation*}
  \label{basic}
  F^\perp_s \fS_\lambda  =
  \left\{
  \begin{array}{ll}
   0 \, \, & \mbox{ if  \ $\lambda_1 < s$},
  \\[3pt]
   \fS_{\tail(\lambda)} \, \,  & \mbox{ if  \ $\lambda_1 = s$},
   \\[3pt]
{\displaystyle \sum_{\gamma\models |\lambda|-s} c_{s,\lambda}^\gamma \fS_{\gamma}} \, \,  & 
\mbox{ if  \ $\lambda_1 > s$},
 \end{array}
  \right. 
  \end{equation*}
 where $c_{s,\lambda}^\gamma=1$ or $0$.
\end{Corollary}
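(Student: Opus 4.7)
The plan is to obtain the Corollary as a direct consequence of Lemma~\ref{partiZs} combined with Proposition~\ref{Prop:partialPieri} and Proposition~\ref{prop:coefset}, splitting into three cases according to whether $\lambda_1$ is less than, equal to, or greater than $s$.

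In the first two cases, the desired statement is immediate. For $\lambda_1 < s$, Lemma~\ref{partiZs}(i) gives $\Z_{s,\lambda} = \emptyset$, so every set $\Z_{s,\lambda}^\gamma$ is empty and hence $c_{s,\lambda}^\gamma = 0$; applying Proposition~\ref{Prop:partialPieri} then yields $F^\perp_s \fS_\lambda = 0$. For $\lambda_1 = s$, Lemma~\ref{partiZs}(ii) gives $\Z_{s,\lambda} = \{(s,0,\ldots,0)\}$. The single vector has no negative entries, so $\sgn = 1$, and $\comp(\lambda - (s,0,\ldots,0)) = \comp(0,\lambda_2,\ldots,\lambda_m) = \tail(\lambda)$. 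Proposition~\ref{Prop:partialPieri} then gives $F^\perp_s \fS_\lambda = \fS_{\tail(\lambda)}$.

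The case $\lambda_1 > s$ is the one requiring slightly more attention. By Lemma~\ref{partiZs}(iii), every $\beta \in \Z_{s,\lambda}$ satisfies $\sgn(\beta) = 1$, so
\[
c_{s,\lambda}^\gamma \ = \ \sum_{\beta \in \Z_{s,\lambda}^\gamma} \sgn(\beta) \ = \ |\Z_{s,\lambda}^\gamma|\,,
\]
which is in particular non-negative. To conclude $c_{s,\lambda}^\gamma \in \{0,1\}$, I would invoke Proposition~\ref{prop:coefset}: case (A1) gives $\Z_{s,\lambda}^\gamma = \emptyset$, and case (A2) gives $|\Z_{s,\lambda}^\gamma| \le 1$ immediately. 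For case (A3), the set is described in terms of integers $k \le r$ with $\lambda_k < \lambda_{k+1} < \cdots < \lambda_r$. Since $\lambda$ is weakly decreasing, this strict-increase condition is only compatible with $r = k$, so the list of vectors collapses to a single element, and again $|\Z_{s,\lambda}^\gamma| \le 1$.

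The main (and essentially only) obstacle is noticing that the strict-increase requirement on the $\lambda_i$ in Proposition~\ref{prop:coefset}(A3) cannot be satisfied beyond a single index when $\lambda$ is a partition; once this is observed, all remaining steps are direct applications of the results already established.
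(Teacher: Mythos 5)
Your proposal is correct and follows exactly the route the paper intends: the paper states the corollary as a direct consequence of Lemma~\ref{partiZs}, and your three-case analysis (together with the observation that the strict chain $\lambda_k<\cdots<\lambda_r$ in Proposition~\ref{prop:coefset}(A3) forces $r=k$ for a weakly decreasing $\lambda$) is the natural filling-in of that assertion. No gaps.
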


\begin{Example}
{\rm
For $\alpha = [3, 1, 2]$ and $s = 2$, an application of \eqref{formula} gives:
\[
F^\perp_2 \fS_{312} = \fS_{112},
\]
To see this, we consider the compositions $\gamma\models 6-2=4$ of length $2$ and $3$.
For length $3$, the only possible $\gamma$'s are $[1,1,2]$, $[1,2,1]$ and $[2,1,1]$. 
We consider the unique vector $\beta=\alpha-\gamma$
and check if it satisfies the conditions (Z1)--(Z3).
The three vectors $\beta$ that we get in this case are $(2,0,0)$, $(2,-1,1)$ and $(1,0,1)$. 
The vector $\beta=(2,-1,1)$ fails the condition (Z3a) in the second coordinate
since $\alpha_2=1 > 2-2=s-\beta_1$ but $\beta_2\not\ge 0$. 
 The vector $\beta=(1,0,1)$ fails the condition (Z3b) in the second coordinate
since $\alpha_2=1=2-1=s-\beta_1$ but $\beta_2\ne \alpha_2$ nor $\beta_2\not< 0$.
We have
 $$ c_{2, 312}^{112}=\sgn(2,0,0)=1;\quad c_{2, 312}^{121}=c_{2, 312}^{211}=0.$$
Now for compositions $\gamma$ of length 2 the possibilities are $[1,3]$, $[2,2]$ and $[3,1]$.
We compute $(k,r)$ as in (A3) for each $\gamma$ and get $(3,3)$, $(2,3)$, $(3,3)$, respectively.
Since $3-2=1$ is not even, we can disregard $\gamma=[2,2]$. For $\gamma=[1,3]$ we have $k=3$ and the vector
$\beta=(\alpha_1-\gamma_1,\alpha_2-\gamma_2,\alpha_3)=(2,-2,2)$ fails condition (Z3a) in the second 
coordinate (same argument as above).
For $\gamma=[3,1]$ we have again $k=3$ and the vector
$\beta=(\alpha_1-\gamma_1,\alpha_2-\gamma_2,\alpha_3)=(0,0,2)$ fails condition (Z3c) in the second coordinate since
 $\alpha_2=1<2-0=s-\beta_1$ but $\beta_2\not<0$. We have
  $$ c_{2, 312}^{13}= c_{2, 312}^{22}=c_{2, 312}^{31}=0.$$
}
\end{Example}

\end{document}